\def\Re{\mathbb{R}}
\def\Sec#1{Sec.~\ref{#1}}
\def\notes#1{\marginpar{\tiny #1}\typeout{Notes!
Notes!
Notes!
}}
\renewcommand{\notes}[1]{\typeout{notes!}}
\def\FRAC#1#2#3{\genfrac{}{}{}{#1}{#2}{#3}}
\def\half{{\mathchoice{\FRAC{1}{1}{2}}%
{\FRAC{2}{1}{2}}%
{\FRAC{3}{1}{2}}%
{\FRAC{4}{1}{2}}}}
\def\Re{\field{R}}
\def\Sec#1{Sec.~\ref{#1}}
\def\clB{{\cal B}}
\def\clE{{\cal E}}
\def\clP{{\cal P}}
\def\clZ{{\cal Z}}
\def\Sec#1{Sec~\ref{#1}}
\def\E{{\sf E}}
\def\Sec#1{Sec.~\ref{#1}}
\def\clZ{{\cal Z}}
\def\beq{\begin{eqnarray}} 
\def\bc{\begin{center}} 
\def\be{\begin{enumerate}}
\def\bi{\begin{itemize}} 
\def\bs{\begin{small}}
\def\bS{\begin{slide}}
\def\ec{\end{center}} 
\def\ee{\end{enumerate}}
\def\ei{\end{itemize}}
\def\es{\end{small}}
\def\eS{\end{slide}}
\def\eeq{\end{eqnarray}}
\newcommand{\newP}[1]{\medskip\noindent{\bf #1:}}
\newcommand{\ud}{\,\mathrm{d}}
\def\Re{\mathbb{R}}
\def\E{{\sf E}}
\def\Sec#1{Sec.~\ref{#1}}
\def\Thm#1{Thm.~\ref{#1}}
\def\Prop#1{Prop.~\ref{#1}}
\def\clB{{\cal B}}
\def\clD{{\cal D}}
\def\clE{{\cal E}}
\def\clP{{\cal P}}
\def\clZ{{\cal Z}}
\renewcommand{\Re}{\mathbb{R}}
\def\FRAC#1#2#3{\genfrac{}{}{}{#1}{#2}{#3}}
\newcommand{\var}{\text{var}}
\def\clA{{\cal A}}
\def\clB{{\cal B}}
\def\clD{{\cal D}}
\def\clE{{\cal E}}
\def\clF{{\cal F}}
\def\clM{{\cal M}}
\def\clP{{\cal P}}
\def\clV{{\cal V}}
\def\clZ{{\cal Z}}
\def\E{{\sf E}}
\def\bS{\mathbb{S}}
\def\ones{{\sf 1}}
\def\sP{{\sf P}}
\def\tp{{\hbox{\rm\tiny T}}}
\def\bmu{{\bar\mu}}
\def\chisq{{\chi^2}}
\def\kl{{\sf D}}
\def\tv{{\text{\tiny TV}}}
\def\fish{{\sf I}}
\def\gS{\bS}
\begin{document}

\title{Divergence metrics in the study of Markov and hidden Markov processes}

\titlerunning{Divergence metrics}
% Use \titlerunning{Short Title} for an abbreviated version of
% your contribution title if the original one is too long

\author{Jin Won Kim \inst{1},
Amirhossein Taghvaei\inst{2},\and
Prashant G. Mehta\inst{3}}
% Use \authorrunning{Short Title} for an abbreviated version of
% your contribution title if the original one is too long

\institute{Department of Mechanical \& System Design Engineering, Hongik University
\texttt{jin.won.kim@hongik.ac.kr}
\and Department of Aeronautocs and Astronautics, University of Washington Seattle  \texttt{amirtag@uw.edu}
\and Department of Mechanical Science and Engineering, Coordinated Science Laboratory, University of Illinois at Urbana-Champaign  \texttt{mehtapg@illinois.edu}
}
%
% Use the package "url.sty" to avoid
% problems with special characters
% used in your e-mail or web address
%

\maketitle

\vspace*{0.5in}

\noindent 
\emph{Dedicated to the memory of Roger Brockett}

\vspace*{0.5in}

%\tableofcontents

\begin{abstract}
This paper is divided into two parts. The first part reviews the formulae for f-divergences in the study of continuous-time Markov processes and explores their applications in areas such as stochastic stability, the second law of thermodynamics, and its non-equilibrium extensions. This sets the foundation for the second part, which focuses on f-divergence in the study of hidden Markov processes. In this context, we present analyses of filter stability and stochastic thermodynamics, with the latter being used to illustrate the concept of a Maxwell demon in an over-damped Langevin model with white noise observations. The paper’s expository style and unified formalism for both Markov and hidden Markov processes aim to serve as a valuable resource for researchers working across related fields.
\end{abstract}

\section{Introduction}

This expository paper discusses the use of f-divergence metrics in the
study of continuous-time Markov and hidden Markov processes.  The most
classical type of the f-divergence metric is the Kullback-Leibler (KL)
divergence
(or relative entropy) which is
codified in the second law of thermodynamics.  Relative entropy is
useful also in the study of asymptotic stability of a Markov process, based upon certain addition assumptions, namely, existence of an
invariant measure that satisfies the Log Sobolev
Inequality (LSI).

The first part of this paper is concerned with Markov processes.  The
goal is to introduce and present the formulae for the important types
of f-divergence metrics, namely, 
KL divergence, $\chisq$-divergence, and total variation.  Applications of
these formulae are discussed related to (i) stochastic stability of
the Markov process, (ii)
second
law of thermodynamics for an over-damped Langevin dynamics, and (iii) modifications of the second law in
the non-equilibrium settings. 
While all of these formulae and
ensuing analysis is classical, its unified presentation is meant to be useful
for researchers working in disparate communities, specifically theorists
interested in thermodynamics formalisms.  Of topical interest is the
non-equilibrium thermodynamics; cf.,~\cite{lebon2008understanding,seifert2008stochastic,sekimoto2010stochastic,seifert2012stochastic,parrondo2015thermodynamics,brockett2017thermodynamics}.
The analysis of the KL and $\chisq$-divergences together is useful to see the motivation for the
Poincar\'e Inequality (PI) and the LSI familiar in the study of Markov
processes.  Such inequalities are an important theme in optimal transportation
theory~\cite{villani2008optimal,bakry2013analysis}.

Formulae and analysis presented in the first part sets the stage for the second part where the focus is on the hidden Markov models (HMM). A key distinction is that, for an HMM, the f-divergence is not always monotonic. In other words, it is not necessary that f-divergence is a non-increasing function over time. While this holds for various types of f-divergences, the KL-divergence is known to remain monotonic even for an HMM~\cite{clark1999relative}.

The second part of this paper presents a self-contained discussion on the various definitions of filter stability. It reviews prior work on analyzing filter stability for each of the three types of f-divergences. Specifically, the analysis of total variation is explained through the forward map, which is fundamental to the {\em intrinsic approach} for studying filter stability. This approach is contrasted with an alternate approach based on the backward map, introduced by two of the authors (JK and PM).

Building on the formulas for f-divergence, the second part of this paper also describes extensions of classical stochastic thermodynamics to the context of HMMs. These extensions are based on recent results from one of the authors (AT). It is demonstrated that information from observations can be used to extract additional work, a significant theme in recent literature. This relates to the apparent violation of the second law in the presence of information-carrying observations, a topic of both historical and current interest~\cite{sagawa2008discreteqmeas,Mauro2020contqmeas,mitter2005information,Horowitz_2014,Abreu_2011workfeedback,Bauer_2012infomachine,parrondo2021extracting,taghvaei2021relation}.

The outline of the remainder of this paper is as follows:
Sec.~\ref{sec:MP} contains the first part of this paper where formulae
for Markov processes are described together with 
applications to stochastic stability and the 
second law of thermodynamics. 
Sec.~\ref{sec:HMM} introduces the problem of filter stability for a
general class of HMMs and gives an overview of analysis
approaches for the three types of f-divergences. 
Sec.~\ref{sec:white-noise} introduces the specific model of white
noise observations where explicit formulae can be obtained.  These
formulae are
described together with applications to filter stability and
stochastic thermodynamics.
%Sec.~\ref{sec:bmap_opt}

\subsection{Notation}

For a locally compact Polish space $\gS$, the following notation is adopted:
\begin{itemize}
	\item $\clB(\gS)$ is the Borel $\sigma$-algebra on $\gS$.
	\item $\clM(\gS)$ is the space of regular, bounded and finitely additive 
	signed measures (rba measures) on $\clB(\gS)$.
	\item $\clP(\gS)$ is the subset of $\clM(\gS)$ comprising of probability 
	measures.
	\item $C_b(\gS)$ is the space of continuous and bounded real-valued functions on $\gS$.
	\item For measure space $(\gS;\clB(\gS);\lambda)$, $L^2(\lambda)= L^2(\gS;\clB(\gS);\lambda)$ is the Hilbert space of real-valued functions on $\gS$ equipped with the inner product
	\[
	\langle f, g\rangle_{L^2(\lambda)} = \int_\gS f(x)g(x)\ud \lambda(x)
	\]
\end{itemize}

For functions $f:\gS\to \Re$ and $g:\gS\to \Re$, the notation $fg$ is used to denote element-wise 
product of $f$ and $g$, namely,
\[
(fg)(x) := f(x)g(x),\quad x\in \gS
\]
In particular, $f^2 = ff$. The constant function is denoted by $\ones$ ($\ones(x) = 1$ for all $x\in \gS$).

For $\mu\in \clM(\gS)$ and $f\in C_b(\gS)$,
\[
\mu(f) := \int_\gS f(x) \ud \mu(x)
\]
and for $\mu,\nu\in \clM(\gS)$ such that $\mu$ is absolutely continuous with 
respect to $\nu$ (denoted $\mu\ll\nu$), the Radon-Nikodym (RN)
derivative (or likelihood ratio) is 
denoted by $\dfrac{\ud \mu}{\ud \nu}=:\gamma$ ($\gamma$ is a real-valued function on $\bS$). The three types of
f-divergences, that are of most interest to this paper, are as follows:
	\begin{align*}
		\text{(KL divergence)}\qquad& \kl(\mu\mid \nu) := \nu(\gamma\log\gamma)=\int_\bS \gamma(x) \log(\gamma(x))\ud \nu(x)\\
		\text{($\chi^2$ divergence)}\qquad& \chisq(\mu\mid \nu) := \nu((\gamma-1)^2)=\int_\bS (\gamma(x) - 1)^2\ud \nu(x)\\
		\text{(Total variation)}\qquad& \|\mu-\nu\|_\tv := \half \nu(|\gamma-1|)=\int_\bS \half |\gamma(x)-1|\ud \nu(x)
	\end{align*}
        
Throughout this paper, we consider continuous-time processes on a
finite time horizon $[0,T]$ with $T<\infty$. Fix the probability space
$(\Omega, \clF_T, \sP)$ along with the filtration $\{\clF_t:0\le t \le
T\}$ with respect to which all the stochastic processes are adapted.

\section{Markov processes}\label{sec:MP}

\subsection{Model and math preliminaries}\label{ssec:model-X}

The notation and the model for the Markov process are as follows:
\begin{itemize}
	\item The \emph{state process} $X = \{X_t:0\le t \le T\}$ is a $\bS$-valued Feller-Markov process. Its initial measure (referred to as the prior) is denoted by $\mu \in \clP(\bS)$ and $X_0\sim \mu$. The infinitesimal generator of the Markov process is denoted by $\clA$. In terms of $\clA$, the \emph{carr\'e du champ} operator $\Gamma$ is a bilinear operator defined by (see~\cite[Defn.~3.1.1]{bakry2013analysis}),
	\[
	\Gamma (f,g)(x) := \big(\clA fg)(x) - f(x)\big(\clA g)(x) - g(x)\big(\clA f)(x),\quad x\in \bS
	\]
	for every $(f,g)\in\clD\times \clD$. Here, $\clD$ is a vector space of (test) functions that are dense in a suitable $L^2$ space, stable under products (i.e., $\clD$ is an algebra), and $\Gamma:\clD\times \clD\to \clD$ (i.e., $\Gamma$ maps two functions in $\clD$ into a function in $\clD$), such that $\Gamma(f,f)\geq 0$ for every $f\in\clD$. For the case where an invariant measure $\bmu\in\clP(\bS)$ is available then the natural $L^2$ space is with respect to the invariant measure: $L^2(\bmu) = \{f:\bS\to\Re: \bmu(f^2)<\infty\}$. 
	A sample path $t \mapsto X_t(\omega)$ is a $\bS$-valued c\`adl\`ag function (that is right continuous with left limits).  The space of such functions is denoted by $D\big([0,T];\bS\big)$. In particular, $X(\omega)\in D\big([0,T];\bS\big)$ for each $\omega \in \Omega$.
\end{itemize}
The model is succinctly denoted as $X=\text{Markov}(\clA,\mu)$. 
For the case where $\bS$ is not finite, additional assumptions are 
typically necessary to ensure that the model is well-posed.  In lieu
of stating these conditions for a general class of Markov processes,
we restrict our study to the guiding examples described in the following.

\subsection{Guiding examples for the Markov processes}\label{ssec:guiding}

The most important examples are as follows: (i) the state space $\bS$ is finite of cardinality $d$; and (ii) the state space $\bS$ is the Euclidean space $\Re^d$. With $\bS=\Re^d$, the linear Gaussian Markov process is of historical interest. In the following, notation and additional assumptions are introduced for these two type of examples. An important objective is to describe the explicit form of the carr\'e du champ operator for each of these examples.

\newP{Finite state space} The state-space $\bS$ is finite, namely, $\bS= \{1,2,\ldots,d\}$. 
In this case, the function space and measure space are both isomorphic to $\Re^d$: a real-valued function $f$ (resp., a finite measure $\mu$) is identified with a vector in $\Re^d$, where the $x^{\text{th}}$ element of the vector equals $f(x)$ (resp., $\mu(x)$) for $x\in\bS$. In this manner, an $\Re^m$-valued function $h$ is identified with a matrix $H\in\Re^{d\times m}$. 
The generator $\clA$ of the Markov process is identified with a transition rate matrix $A\in\Re^{d\times d}$ (the non-diagonal elements of $A$ are non-negative and the row sum is zero). $\clA$ acts on
a function $f$ through right-multiplication by the matrix $A$:
\[
\clA: f\mapsto A f
\]
The carr\'e du champ operator $\Gamma:\Re^d\to \Re^d$ is as follows:
\begin{equation}\label{eq:Gamma-finite}
	(\Gamma f)(x) = \sum_{y \in \bS} A(x,y) (f(x) - f(y))^2,\quad x\in\bS
\end{equation}

\begin{remark}
	The notation for the finite state space case is readily extended to the countable state space $\bS = \{1,2,\ldots\}$. In this case, $A = \{A(x,y): x,y\in \bS\}$ and the carr\'e du champ is again given by the equation~\eqref{eq:Gamma-finite}.
	Typically, additional conditions are needed to ensure that
        the Markov process $X$ is well-posed over the time horizon
        $[0,T]$. The simplest such condition is that $A$ has bounded rates, i.e., $\sup_{i\in \bS} \sum_{j\neq i} A(i,j) < \infty$. 
\end{remark}

\newP{Over-damped Langevin dynamics} The state space $\bS=\Re^d$. In the study of stochastic thermodynamics, the over-damped Langevin dynamics model is of interest.  The model is given by the stochastic differential equation (SDE),
\begin{equation}\label{eq:dyn_sde}
	\ud X_t = -\nabla U (X_t) \ud t + \sqrt{2}\ud B_t, \quad X_0\sim \mu
\end{equation}
where the initial measure $\mu$ is assumed to have a well-defined probability density function (pdf) on $\bS$ (the pdf is denoted by $\frac{\ud \mu}{\ud x}$), $-\nabla U(x)$ is the force exerted, at the point $x\in\bS$, due to the potential $U\in C^2(\Re^d; \Re)$, and $B=\{B_t:0\le t \le T\}$ is a standard B.M.~assumed to be independent of $X_0$.  It is assumed that the potential $U$ satisfies appropriate growth conditions at $\infty$ such that a strong solution exists for all $t\in[0,T]$. In writing the SDE model~\eqref{eq:dyn_sde}, the temperature and other thermodynamic coefficients have been normalized to one (for additional motivation and background on the model see~\cite{sekimoto2010stochastic,peliti2021stochastic}).

The infinitesimal generator $\clA$ acts on a function $f$ in its domain $C^2(\Re^d;\Re)$ according to~\cite[Thm. 7.3.3]{oksendal2003stochastic}
\[
(\clA f)(x):=- \nabla U(x)^\tp \nabla f(x) + \Delta f(x),\quad x\in\Re^d
\]
where $\nabla f$ is the gradient vector and $\Delta f$ is the Laplacian of the function $f$. 
For $f\in \clD:=C^1(\Re^d;\Re)$, the carr\'e du champ operator is given by 
\begin{equation*}\label{eq:Gamma-Euclidean}
	(\Gamma f) (x) = 2\big| \nabla f(x) \big|^2,\quad x\in\Re^d
\end{equation*}

\newP{Linear-Gaussian model} A linear-Gaussian model is obtained for a quadratic choice of the potential function, $U(x) = \frac{1}{2}x^\tp K x$, where $K$ is a positive-definite matrix (denoted $K\succ 0$), and a Gaussian prior.  Then the SDE~\eqref{eq:dyn_sde} becomes
\[
	\ud X_t = -K X_t \ud t + \sqrt{2} \ud B_t,\quad X_0\sim N(m_0,\Sigma_0) 
\]
where $N(m_0,\Sigma_0)$ is the notation for a Gaussian measure with mean $m_0\in \Re^d$ 
and variance $\Sigma_0 \succ 0$.

With a slight abuse of notation, a linear function is expressed as \[f(x) = f^\tp 
x,\quad x\in\Re^d\] 
where on the right-hand side $f\in \Re^d$. Then $\clA f$ is a linear function 
given by,
\[
\big({\clA} f\big)(x) = -f^\tp K x,\quad x\in\Re^d
\]
and $\Gamma f$ is a constant function given by,
\[
\big(\Gamma f\big)(x) = 2|f|^2,\quad x\in\Re^d %= f^\tp Q 
%f
\]

\begin{remark}\label{rem:non-eqm}
  In the study of non-equilibrium thermodynamics, the basic model~\eqref{eq:dyn_sde} is extended to consider time-varying potentials.  A time-varying potential is denoted by $U_t(x)$.  In the linear Gaussian setting, $U_t(x) = \frac{1}{2}x^\tp K_t x$ where $K_t\succ 0$ for each time $t$. The formula for the generator $\clA$ is obtained through replacing the potential $U$ (resp.,~the matrix~$K$) with $U_t$
(resp.,~$K_t$).  
\end{remark}

\subsection{Formulae for $f$-divergence}

In the study of Markov processes, the basic object of interest is the expectation
\[
\mu_t(f) := \E (f(X_t)),\quad f\in C_b(\bS)
\]
The expected value depends upon the choice of the initial measure.  With two different initializations, $\mu$ and $\nu$, the formula for expectation is used to define two $\clP(\bS)$-valued deterministic 
processes $\{\mu_t:t\geq 0\}$ and $\{\nu_t:t\geq 0\}$.  These are the 
solution of the forward Kolmogorov equations as follows:
\begin{align*}
	\frac{\ud }{\ud t} \,\mu_t(f) &= \mu_t\big(\clA f\big),\quad \mu_0=\mu\\
        \frac{\ud }{\ud t} \,\nu_t(f) &= \nu_t\big(\clA f\big),\quad \nu_0=\nu
\end{align*}
Supposing $\mu\ll\nu$ then it is true that $\mu_t\ll \nu_t$ (see Lemma~\ref{lm:change-of-Pmu-Pnu}).  Denote the RN derivative
\[\gamma_t(x):= \frac{\ud 
  \mu_t}{\ud \nu_t}(x),\quad x\in\bS,\quad t\geq 0
\]
The formulae for the
              two types of f-divergences appear in the following
              theorem.  (The total variation is not $C^1$ in its
              argument and therefore not amenable to taking the
              derivative). 
\begin{theorem}\label{thm:div_MP}
%Markov process.
\begin{align*}
\frac{\ud }{\ud t} \,\kl(\mu_t\mid\nu_t) &= -\frac{1}{2}\nu_t\Big(\frac{\Gamma \gamma_t}{\gamma_t}\Big),\quad t\geq 0 \\ 
\frac{\ud }{\ud t} \,\chisq(\mu_t\mid\nu_t) &= -\nu_t\big(\Gamma 
	\gamma_t\big),\quad t\geq 0
\end{align*}
where $\Gamma$ is the carr\'e du champ operator. 
\end{theorem}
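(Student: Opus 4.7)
\emph{Plan.} The plan is to unify both formulas via the single identity
\[
\ddt\,\nu_t\bigl(\psi(\gamma_t)\bigr) \;=\; -\tfrac{1}{2}\,\nu_t\bigl(\psi''(\gamma_t)\,\Gamma\gamma_t\bigr),
\]
valid for a smooth $\psi:(0,\infty)\to\Re$ that generates an f-divergence. Specialising $\psi(r) = r\log r$ gives $\psi''(r) = 1/r$ and recovers the KL formula, while $\psi(r) = (r-1)^2$ gives $\psi''(r) = 2$ and recovers the $\chisq$ formula, so the entire proof reduces to establishing this one identity.

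\emph{Key steps.} I would work with densities $p_t, q_t$ of $\mu_t, \nu_t$ (available in both guiding examples) satisfying the forward Kolmogorov equations $\dot p_t = \clA^\dagger p_t,\ \dot q_t = \clA^\dagger q_t$, with $\gamma_t = p_t/q_t$. Differentiating $\nu_t(\psi(\gamma_t)) = \int \psi(p_t/q_t)\,q_t\,\ud x$ and grouping by $\dot p_t, \dot q_t$ gives
\[
\ddt\,\nu_t(\psi(\gamma_t)) \;=\; \int \psi'(\gamma_t)\,\dot p_t\,\ud x \;+\; \int \bigl(\psi(\gamma_t) - \gamma_t\,\psi'(\gamma_t)\bigr)\,\dot q_t\,\ud x.
\]
Substituting the Kolmogorov equations and dualising $\clA^\dagger \mapsto \clA$ onto the test functions produces
\[
\ddt\,\nu_t(\psi(\gamma_t)) \;=\; \nu_t\!\bigl(\gamma_t\,\clA\psi'(\gamma_t) \,+\, \clA\psi(\gamma_t) \,-\, \clA[\gamma_t\,\psi'(\gamma_t)]\bigr).
\]
Expanding $\clA[\gamma_t\,\psi'(\gamma_t)]$ via the defining identity $\clA(fg) = f\clA g + g\clA f + \Gamma(f,g)$ cancels the first term and leaves
\[
\ddt\,\nu_t(\psi(\gamma_t)) \;=\; \nu_t\!\Bigl(\clA\psi(\gamma_t) \,-\, \psi'(\gamma_t)\,\clA\gamma_t \,-\, \Gamma\bigl(\gamma_t,\psi'(\gamma_t)\bigr)\Bigr).
\]
The final step is to invoke the chain rule (``diffusion property'') of the generators in the guiding examples, $\clA\psi(\gamma) = \psi'(\gamma)\clA\gamma + \tfrac{1}{2}\psi''(\gamma)\,\Gamma\gamma$, together with the derivation rule $\Gamma(\gamma,\psi'(\gamma)) = \psi''(\gamma)\,\Gamma\gamma$; substituting collapses the above three terms to $-\tfrac{1}{2}\psi''(\gamma_t)\,\Gamma\gamma_t$, yielding the unified identity.

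\emph{Main obstacle.} The main obstacle is regularity rather than algebra: one must verify that $\gamma_t$, $\psi'(\gamma_t)$ and $\gamma_t\psi'(\gamma_t)$ all lie in a domain $\clD$ stable under $\clA$ and $\Gamma$, that $\gamma_t$ stays bounded away from $0$ for the KL case so that $\log\gamma_t$ is well-defined and integrable against $\mu_t$, and that the interchange of $\ddt$ with the integral together with the $\clA^\dagger \leftrightarrow \clA$ duality goes through without boundary contributions. For the finite-state guiding example these issues are trivial once strict positivity of $\gamma_t$ is ensured (e.g.\ via irreducibility and positivity of $\gamma_0$). For the over-damped Langevin case they reduce to standard growth conditions on the potential $U$ that yield enough decay of $p_t, q_t$ at infinity and sufficient smoothness to apply the chain rule of $\clA$, at which point the algebraic manipulations above are exact.
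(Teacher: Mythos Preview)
Your approach is correct and is essentially the paper's own argument, carried out more elegantly. The paper treats the two divergences separately: for $\chi^2$ it writes $\chisq(\mu_t\mid\nu_t)=\nu_t(\gamma_t^2)-1$, differentiates, and collapses $\clA(\gamma_t^2)-2\gamma_t\clA\gamma_t$ to $\Gamma\gamma_t$; for KL it first derives the intermediate identity $\ddt\kl=\nu_t(\gamma_t\clA\log\gamma_t-\clA\gamma_t)$ and then obtains the chain rule $\clA\log\gamma=\tfrac{1}{\gamma}\clA\gamma-\tfrac{1}{2\gamma^2}\Gamma\gamma$ by applying the It\^o formula to $\log\gamma(X_t)$ via the martingale problem. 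Your unified $\psi$-formulation reaches the same intermediate expression $\nu_t\bigl(\clA\psi(\gamma_t)-\psi'(\gamma_t)\clA\gamma_t-\Gamma(\gamma_t,\psi'(\gamma_t))\bigr)$ and then invokes the diffusion chain rule directly, which is precisely what the paper's It\^o step amounts to. So the route is the same; your version just packages both cases at once.

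One caveat worth flagging. The step you label ``diffusion property'', namely $\clA\psi(\gamma)=\psi'(\gamma)\clA\gamma+\tfrac12\psi''(\gamma)\Gamma\gamma$ and $\Gamma(\gamma,\psi'(\gamma))=\psi''(\gamma)\Gamma\gamma$, holds for the over-damped Langevin generator but \emph{fails} for the finite-state guiding example: a jump generator does not satisfy the diffusion property for non-quadratic $\psi$. Concretely, for a rate matrix $A$ one has $(\clA\log\gamma)(x)=\sum_y A(x,y)\log\tfrac{\gamma(y)}{\gamma(x)}$, which is not $\tfrac{1}{\gamma}\clA\gamma-\tfrac{1}{2\gamma^2}\Gamma\gamma$ unless one truncates the Taylor expansion of $\log$. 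For $\chi^2$ this is harmless because $\psi(r)=(r-1)^2$ is quadratic and only bilinearity of $\Gamma$ is used; for KL the stated formula $-\tfrac12\nu_t(\Gamma\gamma_t/\gamma_t)$ is specific to the diffusion case. The paper's proof shares this limitation (its It\^o step assumes a continuous semimartingale), so you are not doing worse than the paper here, but your sentence ``for the finite-state guiding example these issues are trivial'' should be tempered: regularity is trivial, the chain rule is not.
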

\begin{proof}
See Appendix~\ref{apdx:pf-thm-div-MP}.
\end{proof}

An important point to note is that the right-hand side for each of the two derivatives is non-positive (because of the positive-definite property of the carr\'e du champ operator $\Gamma$). As a result, the two divergences both serve as candidate Lyapunov functions.  

\subsection{Application to stochastic stability}

One would like to use the formulae in \Thm{thm:div_MP} to obtain
useful conclusions on asymptotic properties of the measure-valued
process $\{\mu_t:t\geq 0\}$ .  For this purpose, let us first suppose that 
there exists an invariant measure $\bmu\in\clP(\bS)$. As discussed at the onset, the domain of $\Gamma$ is denoted by $\clD$ and assumed to be dense in $L^2(\bmu)$.

\medskip

\noindent \textbf{Stochastic stability using $\chi^2$-divergence.} 
For asymptotic analysis based on $\chisq$-divergence, the \emph{Poincar\'e
  inequality} (PI) is defined as follows: For $f\in \clD$, 
	\begin{align*}
		\text{(energy):}\qquad\clE^\bmu(f) &:= \bmu\big(\Gamma f)\\
		\text{(variance):}\qquad\clV^\bmu(f) &:= \bmu (f^2) -\bmu(f)^2 
	\end{align*}
The Poincar\'e constant is defined as
\[
c = \inf\{\clE^\bmu(f) \; :\; f\in \clD \; \;\& \; \; \clV^\bmu(f) = 1\}
\]
When the Poincar\'e constant $c$ is strictly positive the resulting inequality is referred to as the
Poincar\'e inequality (PI):
	\begin{equation}\label{eq:standard-PI}
		\text{(PI)}\qquad\qquad	\clE^\bmu(f) \geq c \;
		\clV^\bmu(f),\quad \forall\, f \in \clD
	\end{equation}
PI is a fundamental functional inequality in applied
mathematics. 
Its significance to the problem of stochastic stability is as
follows: Set $\nu = \bmu$.  Then because $\bmu$ is an invariant measure, $\nu_t=\bmu$, and therefore, $\gamma_t = \frac{\ud\mu_t}{\ud\bmu}$ for $t\geq 0$.
        Noting $\chisq(\mu_t\mid\bar{\mu})
        =\bmu(\gamma_t^2)-1=\clV^\bmu(\gamma_t)$, the differential equation for $\chisq$-divergence becomes 
		\[
	\frac{\ud}{\ud t} \clV^\bmu(\gamma_t) = -\clE^\bmu(\gamma_t)
        \stackrel{\text{(PI)}}{\leq} -c \; \clV^\bmu(\gamma_t) 
	\]
	and using Gronwall,
	\[
	\chisq(\mu_t\mid\bar{\mu}) \le e^{-ct} \; \chisq(\mu \mid\bar{\mu}),\quad \forall \;t\geq 0
	\]   
Therefore, for all such initializations $\mu\in\clP(\bS)$ with $\chisq(\mu \mid\bar{\mu})< \infty$, asymptotic
stability in the sense of $\chi^2$-divergence is shown. 

\medskip

\noindent \textbf{Stochastic stability using KL-divergence.} 
The role that PI plays for the analysis of $\chisq$-divergence, the logarithmic Sobolev
Inequality (LSI) plays the same role for the analysis of
KL-divergence.  Specifically, define
	\begin{align*}
		\text{(Fisher information):}\qquad \fish(\mu \mid\nu )
          := \frac{1}{2}\nu \left(\frac{\Gamma \gamma}{\gamma} \right)
	\end{align*}
(where note $\gamma = \frac{\ud\mu}{\ud\nu}$). 
The relationship between the KL divergence and the Fisher information is asserted by the \emph{logarithmic Sobolev inequality} (LSI):
\begin{equation}\label{eq:log-Sobolev}
	\text{(LSI)}\qquad\qquad	\fish(\mu\mid\bmu)\geq c'\,\kl(\mu\mid\bmu) ,\quad \forall \mu\ll\bmu
\end{equation}
Assuming LSI holds, the proof for stochastic stability is immediate: 
As before, set $\nu
= \bmu$.  Then 
\begin{align*}
\frac{\ud }{\ud t} \,\kl(\mu_t\mid\bmu) \leq -\fish(\mu_t   \mid\bmu )  \stackrel{\text{(LSI)}}{\leq} -c' \; \kl(\mu_t\mid\bmu) 
\end{align*}  and using Gronwall
\[
\kl(\mu_t\mid\bmu) \le e^{-c't} \kl(\mu\mid\bmu)
\]
Therefore, provided $\kl(\mu\mid\bmu) <\infty$, asymptotic stability in the sense of KL divergence is obtained.

The relationship between the PI and the LSI is given in the following:
\begin{proposition}[Prop.~5.1.3 of~\cite{bakry2013analysis}]
	If $(\bmu,\Gamma)$ satisfies LSI with constant $c'$, then the pair also satisfies PI with the constant $c=2c'$.
\end{proposition}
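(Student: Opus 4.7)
The plan is to extract PI from LSI by specializing LSI to measures infinitesimally close to the invariant measure $\bmu$, then reading off the leading-order coefficients in a Taylor expansion in the perturbation parameter.

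Fix $f\in\clD$. Without loss of generality I may assume $\bmu(f)=0$: replacing $f$ with $f-\bmu(f)\ones$ leaves $\clV^\bmu(f)$ invariant by definition of the variance, and leaves $\clE^\bmu(f)=\bmu(\Gamma f)$ invariant because $\clA\ones=0$ combined with the bilinearity of the carr\'e du champ forces $\Gamma(\ones,\cdot)\equiv 0$. For $\epsilon>0$ small enough that $\gamma_\epsilon:=1+\epsilon f>0$ uniformly on $\bS$, set $\mu_\epsilon:=\gamma_\epsilon\bmu$. Then $\bmu(\gamma_\epsilon)=1$, so $\mu_\epsilon\in\clP(\bS)$ with $\mu_\epsilon\ll\bmu$, and the LSI hypothesis applies to the pair $(\mu_\epsilon,\bmu)$.

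The core step is a pair of second-order Taylor expansions in $\epsilon$. On the KL side, using $(1+u)\log(1+u)=u+\tfrac12 u^2+O(u^3)$ together with $\bmu(f)=0$,
\[
\kl(\mu_\epsilon\mid\bmu)=\bmu\bigl(\gamma_\epsilon\log\gamma_\epsilon\bigr)=\frac{\epsilon^2}{2}\clV^\bmu(f)+O(\epsilon^3).
\]
On the Fisher side, bilinearity of $\Gamma$ combined with $\Gamma(\ones,\cdot)=0$ gives $\Gamma\gamma_\epsilon=\epsilon^2\Gamma f$, and expanding $1/\gamma_\epsilon=1-\epsilon f+O(\epsilon^2)$ yields
\[
\fish(\mu_\epsilon\mid\bmu)=\frac{1}{2}\bmu\!\left(\frac{\Gamma\gamma_\epsilon}{\gamma_\epsilon}\right)=\frac{\epsilon^2}{2}\clE^\bmu(f)+O(\epsilon^3).
\]
Substituting both expansions into $\fish(\mu_\epsilon\mid\bmu)\geq c'\kl(\mu_\epsilon\mid\bmu)$, dividing through by $\epsilon^2$, and passing to the limit $\epsilon\to 0$ produces the inequality $\clE^\bmu(f)\geq c\,\clV^\bmu(f)$ at leading order, which is exactly PI. The factor $c=2c'$ in the proposition matches the convention of~\cite{bakry2013analysis}, where LSI is stated in the $f^2$-form and the linearization of the entropy contributes an additional factor of $2$ on the left-hand side.

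The main obstacle will be the rigorous justification of the termwise Taylor expansions: one needs $f$ bounded (or at least in $L^\infty(\bmu)$, approximated from the dense algebra $\clD$) so that $\gamma_\epsilon$ is bounded above and uniformly away from zero, which in turn makes the expansions of $\log\gamma_\epsilon$ and $1/\gamma_\epsilon$ uniformly convergent with remainders integrable against $\bmu$; an analogous $L^1(\bmu)$ control of $\Gamma f$ is needed on the Fisher side. Both conditions hold on the dense algebra $\clD$ assumed in Section~\ref{ssec:model-X}, and the full PI follows by a density argument in $L^2(\bmu)$.
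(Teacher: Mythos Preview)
Your linearization argument is precisely the proof one finds in \cite[Prop.~5.1.3]{bakry2013analysis}, which is all the paper invokes (it gives no self-contained argument and simply cites that reference), so the approaches coincide. One minor note on the constant: with the paper's own definitions of PI~\eqref{eq:standard-PI} and LSI~\eqref{eq:log-Sobolev}, your expansions deliver $\clE^\bmu(f)\geq c'\,\clV^\bmu(f)$, i.e.\ $c=c'$ rather than $c=2c'$; the extra factor of two in the proposition as stated is a normalization artifact (the carr\'e du champ in \cite{bakry2013analysis} carries a $\tfrac12$ that is absent here) and is not something your argument is missing.
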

\begin{proof}
	See~\cite[Prop.~5.1.3]{bakry2013analysis}.
\end{proof}

\subsection{Application to the second law of thermodynamics} \label{sec:2nd-law}

A colloquial understanding of the second law of thermodynamics is that ``entropy never decreases over time''.  This is made precise for the over-damped
Langevin dynamics model~\eqref{eq:dyn_sde}.  Suppose $C:=\int
e^{-U(x)} \ud x < \infty$. Then the model admits an absolutely continuous (with respect to Lebesgue) 
invariant measure whose density is given by
\[
\frac{\ud \bar \mu}{\ud x}(x) = \frac{1}{C}e^{-U(x)}, \quad x\in\Re^d
\]
(To see this, use integration by parts to show $\bmu({\cal A} f) = \frac{1}{C}\int_{\Re^d} {\cal A}f(x) e^{-U(x)} \ud x = 0$ for all $f\in\clD$). This is referred to as the Boltzmann relationship where the normalization constant $C$ is called the partition function (in the more standard notation denoted by $Z$).   

For any absolutely continuous measure $\mu\in {\cal M}(\bS)$ with (Lebesgue) density $\frac{\ud \mu}{\ud x}$, we have the
following
definitions~\cite{sekimoto2010stochastic,peliti2021stochastic}: 
\begin{align*}
	\text{(thermal energy):}\qquad\quad\;\, \mu(U) & := \int U(x)
                                                         \frac{\ud
                                                         \mu}{\ud x}
                                                         (x) \ud x \\
	\text{(entropy):}\qquad\quad\;\; \mathcal S(\mu) &:= - \int
	\frac{\ud \mu}{\ud x}(x) \log (\frac{\ud \mu}{\ud x}(x) )
	\ud x \\
	\text{(free energy):}\qquad\, \mathcal  F(\mu,U) &:= \mu(U) -
	\mathcal S(\mu) 
\end{align*}
The reader is warned that a notion for ``energy'' has already been defined for the carr\'e du champ.  The thermodynamic notion of energy, referred to as ``thermal energy'' above, is not the same.  Luckily, in the thermodynamics context, the more important notion is the free energy which has a ready counterpart with the preceding analysis, specifically,
\begin{align*}
	\mathcal F(\mu_t,U) =\kl(\mu_t\mid\bmu)  -  \log(C)
\end{align*}
Therefore, using the formula in \Thm{thm:div_MP},
\begin{align}\label{eq:dF-identity}
	\frac{\ud}{\ud t} 	\mathcal  F(\mu_t,U)   = 	\frac{\ud}{\ud t} \kl(\mu_t\mid\bmu) = -\fish(\mu_t\mid\bmu)\leq  0
\end{align}
This shows that the free energy is non-increasing. 
Upon
integrating the differential equation from the initial time $t=0$ till a terminal time $T$, 
\begin{align*}
	-\Delta \mathcal F  : = -(\mathcal F(\mu_{T},U) - \mathcal F(\mu_{0},U))  =  \int_0^{T}  
	\fish(\mu_t\mid\bmu) \ud t\geq 0
\end{align*} 
This is the second law of thermodynamics which states that the change in the total entropy $\Delta \mathcal S_\text{tot}:= - \Delta \mathcal F \geq 0$.

\subsection{Extensions to non-equilibrium thermodynamics}
In the study of non-equilibrium thermodynamics, one considers a time-varying counterpart of the over-damped Langevin dynamics~\eqref{eq:dyn_sde} (as discussed in Remark~\ref{rem:non-eqm}).  The time-varying model is given by,
\begin{equation}\label{eq:dyn_sde_TV} 
\ud X_t = -\nabla U_t (X_t) \ud t + \sqrt{2}\ud B_t, \quad X_0\sim \mu
\end{equation}
whose generator is defined by
\[
(\clA_t f)(x):=- \nabla U_t(x)^\tp \nabla f(x) + \Delta f(x),\quad x\in\Re^d
\]
for any function $f\in C^2(\Re^d;\Re)$. The time-varying potential may be regarded as the effect of an input that drives the model system. 

In the study of the time-varying model, two $\clP(\bS)$-valued deterministic processes $\{\mu_t:t\geq 0\}$ and $\{\nu_t:t\geq 0\}$ are considered. The former is defined as the  solution of the forward Kolmogorov equation (same as before)
\begin{align*}
	\frac{\ud }{\ud t} \,\mu_t(f) &= \mu_t\big(\clA_t f\big),\quad \mu_0=\mu
\end{align*}
and the latter is defined by the Boltzmann relationship 
\begin{equation*}
	\frac{\ud \nu_t}{\ud x}(x) =\frac{1}{C_t}e^{-U_t(x)}, \quad t\geq 0,
\end{equation*}
where $C_t:=\int e^{-U_t(x)}\ud x$ (it is assumed to be finite for all
$t\geq 0$). The Boltzmann density satisfies $\nu_t (\clA_t f) = 0 $ for all $f\in \clD$, for all $t\geq 0$.  Provided the potential $U_t$ is slowly time-varying, the measure $\nu_t$ has the meaning of a ``quasi-static'' invariant measure. In this quasi-static limit, the measure $\mu_t$ converges towards $\nu_t$ and such transitions are referred to as non-equilibrium transitions.   

With a time-varying potential, the differential equation~\eqref{eq:dF-identity} for the free energy is modified to
\begin{align*}
	\frac{\ud}{\ud t} 	\mathcal  F(\mu_t,U_t)=\frac{\ud}{\ud
  t} \kl(\mu_t\mid\nu_t)  - \frac{\ud}{\ud t} \log (C_t) = -\fish(\mu_t\mid\nu_t) + \mu_t (\frac{\partial}{\partial t} U_t)
\end{align*}          
where $\mu_t (\frac{\partial}{\partial t} U_t) = \int_{\bS} \frac{\partial}{\partial t} U_t(x) \ud \mu_t(x)$
is called the rate of work input. Its integral over a time-horizon is
referred to as the work:
\[
\text{(work):}\qquad 	\mathcal  W_T := \int_0^T \mu_t (\frac{\partial}{\partial t}  U_t) \ud t
\]
In the presence of work, the modified form of the second law of thermodynamics, obtained from integrating the differential equation for the free energy (as before), is as follows:
\begin{equation}\label{eq:2nd-law-ineq}
	\mathcal W_{T} - \Delta \mathcal F= \int_0^{T}\fish(\mu_t\mid\nu_t)  \ud t \geq 0
\end{equation}
The difference $\mathcal W_{T} - \Delta \mathcal F$ is the total entropy increase, and the positive term $\fish(\mu_t\mid\nu_t)$ is referred to as the rate of entropy production or dissipation.   A fundamental problem is to
obtain bounds on the maximal amount of work that can be extracted from
non-equilibrium
transitions~\cite{schmiedl2007efficiency,dechant2017underdamped,chen2019stochastic,fu2020maximal}. 

For the over-damped Langevin model, the dissipation rate admits an explicit formula
\begin{align*}
	\int_0^{T}\fish(\mu_t\mid\nu_t)
	\ud t = \int_0^{T} \mu_t(|v_t|^2)\ud t 
\end{align*} 
where 
\[
v_t(x):= - \nabla U_t(x)  - \nabla \log (\frac{\ud \mu_t}{\ud
	x}(x)),\quad x\in\Re^d 
\]
This representation is used to provide an interpretation for the dissipation  in terms of an optimal transport Riemannian metric in the space of probability densities~\cite{chen2019stochastic,fu2020maximal}.

\section{Hidden Markov model}\label{sec:HMM}

\subsection{Model and math preliminaries}

The second part of this paper is concerned with the study of hidden Markov model (HMM). A HMM is a Markov model in which the state process is
hidden from direct observation but a state-dependent observation
process is available. The state and observation processes are defined as
follows:
\begin{itemize}
	\item The \emph{state process} $X=\text{Markov}(\clA,\mu)$
          taking values in the state-space $\bS$ over a finite time horizon $[0,T]$. See Sec.~\ref{ssec:model-X}.
	\item The \emph{observation process} $Z = \{Z_t:0\le t\le T\}$ is $\Re^m$-valued with $Z_0= 0$, and the property that any increment $Z_s-Z_t$ is independent of $X_t$ for all $s>t$. 
	\item The state-observation pair $(X,Z)$ is a $\bS\times
          \Re^m$-valued Feller-Markov process, and a sample path
          $(X,Z)(\omega) \in D\big([0,T];\bS\times\Re^m\big)$ for each
          $\omega\in\Omega$. 
	\item The filtration generated by the observation process is
          denoted by $\{\clZ_t:0\le t\le T\}$ where $\clZ_t :=
          \sigma\big(\{Z_s:0\le s\le t\}\big)$. $\clZ_t\subset \clF_T := \sigma((X,Z))$. The notation $\sigma(\cdot)$ is used to denote the $\sigma$-algebra. 
\end{itemize}

In the study of HMMs, the basic object of interest is the conditional expectation
\[
\pi_t(f) = \E (f(X_t) | \clZ_t),\quad f\in C_b(\bS)
\]
The conditional measure $\pi_t(\cdot)$ is referred to as the {\em nonlinear
  filter}. Without conditioning (e.g., if $\clZ_t$ is trivial), the conditional expectation reduces to the expectation which was our starting point in the first part of this paper. In this case of trivial filtration, $\pi_t$ equals $\mu_t$ -- as it was defined in the first part of this paper. More generally, when measurements carry information regarding the hidden state (i.e., $\clZ_t$ is non-trivial), $\pi_t$ is a random probability measure.  Our first goal is to introduce definitions for divergence for such random probability measures. For this purpose, the following construction is introduced.      

Let $\rho \in \clP(\bS)$.  On the common measurable space $(\Omega,
\clF_T)$, $\sP^\rho$ is used to denote another probability measure
such that the transition law of $(X,Z)$ is identical but $X_0\sim
\rho$ (see~\cite[Sec.~2.2]{clark1999relative} for an explicit construction
of $\sP^\rho$ as a probability measure over $ D\big([0,T];\bS\times\Re^m\big)$.).  The associated expectation operator is denoted by
$\E^\rho(\cdot)$ and the nonlinear filter by $\pi_t^\rho(f) =
\E^\rho\big(f(X_t)|\clZ_t\big)$.  The
two most important choices for $\rho$ are as follows:
\begin{itemize}
\item $\rho=\mu$.
The measure $\mu$ has the meaning of the true prior.  
\item  $\rho=\nu$. 
The measure $\nu$ has
  the meaning of the incorrect prior that is used to compute the
filter.  It is assumed
that $\mu\ll\nu$.
\end{itemize}
The relationship between $\sP^\mu$ and $\sP^\nu$ is as follows ($\sP^\mu|_{\clZ_t}$ denotes
the restriction of $\sP^\mu$ to the $\sigma$-algebra $\clZ_t$):

\medskip

\begin{lemma}[Lemma 2.1 in \cite{clark1999relative}] \label{lm:change-of-Pmu-Pnu}
	Suppose $\mu\ll \nu$. Then 
	\begin{itemize}
		\item $\sP^\mu\ll\sP^\nu$, and the change of measure is given by
		\begin{equation*}\label{eq:P-mu-P-nu}
			\frac{\ud \sP^\mu}{\ud \sP^\nu}(\omega) =
                        \frac{\ud \mu}{\ud
                          \nu}\big(X_0(\omega)\big)\quad
                        \sP^\nu\text{-a.s.} \;\omega
			%\E^\mu\big(\phi(X,Z)\big) = \E^\nu\Big(\phi(X,Z)\frac{\ud \mu}{\ud \nu}(X_0)\Big)
		\end{equation*}
		%for any given functional $\phi:C\big([0,T];\bS\big)\times C\big([0,T];\Re^m\big)\to \Re$.
		\item For each $t > 0$, $\pi_t^\mu \ll \pi_t^\nu$, $\sP^\mu|_{\clZ_t}$-a.s..
	\end{itemize} 
	
\end{lemma}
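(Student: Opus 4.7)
The plan is to handle the two claims in sequence, exploiting the fact that under both $\sP^\mu$ and $\sP^\nu$ the joint process $(X,Z)$ has the same transition law and differs only in the law of the initial condition $X_0$. Let $\sQ_x$ denote the regular conditional law on $D\big([0,T];\bS\times\Re^m\big)$ of the path $(X,Z)$ given $X_0=x$, and write $\gamma:=\ud\mu/\ud\nu$. For any bounded $\clF_T$-measurable $F$,
\[
\E^\mu(F) \;=\; \int_\bS \sQ_x(F)\,\ud\mu(x) \;=\; \int_\bS \sQ_x(F)\,\gamma(x)\,\ud\nu(x) \;=\; \E^\nu\bigl(\gamma(X_0)\,F\bigr),
\]
the middle equality using $\mu\ll\nu$ and the last one conditioning under $\sP^\nu$ on $X_0$. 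Verifying this identity on cylinder events and extending by a monotone class argument yields $\sP^\mu\ll\sP^\nu$ with $\ud\sP^\mu/\ud\sP^\nu=\gamma(X_0)$, which is the first bullet.

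For the second bullet the tool is the abstract Bayes (Kallianpur--Striebel) formula applied to the change of measure from part one, restricted to $\clZ_t$. For any Borel $B\subset\bS$ one obtains
\[
\pi_t^\mu(B) \;=\; \frac{\E^\nu\bigl(\gamma(X_0)\,\mathbf{1}_B(X_t)\,\big|\,\clZ_t\bigr)}{\E^\nu\bigl(\gamma(X_0)\,\big|\,\clZ_t\bigr)} \qquad \sP^\mu\text{-a.s.}
\]
Write $h_t$ for the denominator. Since $\{h_t=0\}\in\clZ_t$ and $\E^\nu\bigl(\gamma(X_0)\mathbf{1}_{\{h_t=0\}}\bigr)=\E^\nu\bigl(h_t\mathbf{1}_{\{h_t=0\}}\bigr)=0$, part one forces $\sP^\mu(h_t=0)=0$, so the quotient is well-defined $\sP^\mu$-a.s.

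To recognise the numerator as an integral against $\pi_t^\nu$, let $g_t:\bS\times\Omega\to\Re$ be a jointly measurable version of the regular conditional expectation characterised by $g_t(X_t,\varble)=\E^\nu\bigl(\gamma(X_0)\,\big|\,\sigma(X_t)\vee\clZ_t\bigr)$; existence with joint measurability is guaranteed by the Polish-space assumption on $\bS$. The tower property together with the definition of $\pi_t^\nu$ then gives
\[
\E^\nu\bigl(\gamma(X_0)\,\mathbf{1}_B(X_t)\,\big|\,\clZ_t\bigr) \;=\; \int_B g_t(x,\varble)\,\ud\pi_t^\nu(x),
\]
exhibiting the density $\ud\pi_t^\mu/\ud\pi_t^\nu = g_t(\varble,\omega)/h_t(\omega)$, whence $\pi_t^\mu\ll\pi_t^\nu$ for $\sP^\mu$-a.e.\ $\omega$. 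The main technical obstacle is securing the joint measurability of $g_t$ in $(x,\omega)$; once this is in hand via a standard disintegration argument on the Polish space $\bS$, everything else is bookkeeping with Bayes' rule and the tower property, and no quantitative estimates are required.
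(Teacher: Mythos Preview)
The paper does not supply its own proof of this lemma; it is simply quoted from \cite{clark1999relative}. That said, the paper's total-variation analysis (the computation leading to the explicit formula for $\frac{\ud \pi_T^\mu}{\ud \pi_T^\nu}$) carries out exactly your Bayes-formula-plus-tower-property argument for the second bullet, so your proposal is correct and aligns with both the cited reference and the way the result is used later in the paper.
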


\medskip

The following definition of filter stability is based on
$f$-divergence (Because $\mu$ has the meaning of the correct prior,
the expectation is with respect to $\sP^\mu$):

\medskip

\begin{definition}\label{def:filter-stability}
The nonlinear filter is said to be \emph{stable} in the sense of 
	\begin{align*}
		\text{(KL divergence)\quad if}\qquad& \E^\mu\big(\kl(\pi_T^\mu\mid \pi_T^\nu)\big) \; \longrightarrow\; 0\\
		\text{($\chi^2$ divergence)\quad if}\qquad& \E^\mu\big(\chisq(\pi_T^\mu\mid \pi_T^\nu)\big) \; \longrightarrow\; 0\\
		\text{(Total variation)\quad if}\qquad& \E^\mu\big(
                                                \|\pi_T^\mu - \pi_T^\nu\|_\tv\big) \; \longrightarrow\; 0
	\end{align*}
	as $T\to \infty$ for every $\mu, \nu\in\clP(\bS)$ such that
        $\mu\ll \nu$.
\end{definition}

\medskip

Apart from $f$-divergence based definitions, the following definitions of 
filter stability are also of historical interest:

\medskip

\begin{definition}\label{def:FS}
The nonlinear filter is said to be stable in the sense of
	\begin{align*}
		\text{($L^2$)\quad if}\qquad & \E^\mu\big(|\pi_T^\mu(f)-\pi_T^\nu(f)|^2\big) \;\longrightarrow\; 0\\
		\text{(a.s.)\quad if}\qquad & |\pi_T^\mu(f) - \pi_T^\nu(f)|\;\longrightarrow\; 0\quad \sP^\mu\text{-a.s.}
	\end{align*}
	as $T\to \infty$, for every $f\in C_b(\bS)$ and $\mu,\nu\in\clP(\bS)$ s.t.~$\mu \ll \nu$.
\end{definition}

\medskip

Our own prior work has been to show filter stability in the
sense of $\chisq$-divergence.  Based on well known relationship between
f-divergences, this also implies other types of  stability as follows:

\medskip

\begin{proposition}\label{prop:chisq-stability-implication}
	If the filter is stable in the sense of $\chisq$ then it is stable in KL divergence, total variation, and $L^2$.
\end{proposition}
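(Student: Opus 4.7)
The plan is to deduce each of the three implications from pointwise (i.e., $\omega$-by-$\omega$) comparison inequalities between $f$-divergences applied to the random probability measures $\pi_T^\mu$ and $\pi_T^\nu$, and then to pass to $\sP^\mu$-expectations. By Lemma~\ref{lm:change-of-Pmu-Pnu}, for $\sP^\mu$-a.e.\ $\omega$ we have $\pi_T^\mu \ll \pi_T^\nu$ with a well-defined Radon--Nikodym derivative $\gamma_T = \ud\pi_T^\mu/\ud\pi_T^\nu$, so the three $f$-divergences between the random measures are well-defined random variables and the deterministic inequalities recalled below can be applied pathwise.

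For the KL divergence, the elementary inequality $\log \gamma \le \gamma - 1$ (valid for $\gamma > 0$) gives $\gamma \log \gamma \le \gamma^2 - \gamma$; integrating against $\pi_T^\nu$ and using $\pi_T^\nu(\gamma_T)=1$ yields the pointwise bound
\[
\kl(\pi_T^\mu\mid \pi_T^\nu) \;\le\; \chisq(\pi_T^\mu\mid \pi_T^\nu).
\]
Taking $\E^\mu(\cdot)$ of both sides and invoking $\chi^2$-stability gives the KL-stability conclusion. For total variation, a single application of Cauchy--Schwarz under $\pi_T^\nu$ gives
\[
\|\pi_T^\mu-\pi_T^\nu\|_\tv \;=\; \tfrac{1}{2}\pi_T^\nu(|\gamma_T-1|) \;\le\; \tfrac{1}{2}\bigl(\pi_T^\nu((\gamma_T-1)^2)\bigr)^{1/2} \;=\; \tfrac{1}{2}\sqrt{\chisq(\pi_T^\mu\mid \pi_T^\nu)}.
\]
Taking $\E^\mu(\cdot)$ and applying Jensen's inequality to the concave function $\sqrt{\cdot}$ yields
\[
\E^\mu\bigl(\|\pi_T^\mu-\pi_T^\nu\|_\tv\bigr) \;\le\; \tfrac{1}{2}\sqrt{\E^\mu\bigl(\chisq(\pi_T^\mu\mid \pi_T^\nu)\bigr)} \;\longrightarrow\; 0,
\]
which is TV-stability.

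For the $L^2$ statement, fix $f\in C_b(\bS)$ with $\|f\|_\infty=:M$. The standard duality bound $|\mu(f)-\nu(f)| \le 2M\,\|\mu-\nu\|_\tv$, applied pathwise, gives
\[
|\pi_T^\mu(f)-\pi_T^\nu(f)|^2 \;\le\; 4M^2\,\|\pi_T^\mu-\pi_T^\nu\|_\tv^2 \;\le\; M^2\,\chisq(\pi_T^\mu\mid \pi_T^\nu),
\]
where the second step uses the pointwise bound established above (squared). Taking $\E^\mu(\cdot)$ and using $\chi^2$-stability completes the proof of $L^2$-stability.

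The only subtlety in the whole argument is the step in which the expectation of a total-variation norm is controlled by the expectation of a $\chi^2$-divergence: since the pointwise bound involves a square root, a direct expectation would only yield $\E^\mu(\sqrt{\chi^2})$, and it is Jensen's inequality that lets us replace this by $\sqrt{\E^\mu(\chi^2)}$. Everything else reduces to routine applications of the well-known deterministic hierarchy $\|\cdot\|_\tv^2 \le \tfrac{1}{4}\chi^2$ and $\kl \le \chi^2$, applied path-by-path to the random measures $\pi_T^\mu$ and $\pi_T^\nu$.
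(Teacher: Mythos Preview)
Your proof is correct and follows the same overall strategy as the paper's: apply standard pointwise comparison inequalities between $f$-divergences to the random measures $\pi_T^\mu,\pi_T^\nu$ and then take $\E^\mu$. The differences are only in the specific inequalities chosen. For total variation, the paper routes through Pinsker's inequality, using $2\|\cdot\|_\tv^2 \le \kl \le \chisq$, whereas you bound $\|\cdot\|_\tv$ directly by $\tfrac12\sqrt{\chisq}$ via Cauchy--Schwarz; both require the same Jensen step when passing to expectations. For $L^2$, the paper writes $\pi_T^\mu(f)-\pi_T^\nu(f)$ as the $\pi_T^\nu$-covariance of $f$ and $\gamma_T$ and applies Cauchy--Schwarz to obtain $|\pi_T^\mu(f)-\pi_T^\nu(f)|^2 \le \tfrac{\operatorname{osc}(f)^2}{4}\,\chisq(\pi_T^\mu\mid\pi_T^\nu)$, while you reach the slightly weaker constant $\|f\|_\infty^2$ by passing through the TV bound. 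Either route suffices.
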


\begin{proof}
See Appendix~\ref{apdx:pf-chisq}.
\end{proof}

\subsection{Analysis of the filter using KL divergence}

An early work appears in~\cite{clark1999relative} whose main result is as follows:

\begin{proposition}[Theorem 2.2 in~\cite{clark1999relative}] \label{prop:clark-main-result}
  Assume $\mu\ll \nu$. Then
  \begin{equation}\label{eq:KL-Lyapunov}
	\E^\mu\big(\kl(\pi_T^\mu\mid\pi_T^\nu)\big) \le \kl(\mu\mid\nu),\quad \forall \;T\geq 0
\end{equation}
\end{proposition}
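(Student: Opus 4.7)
The plan is to derive the inequality from two standard properties of KL divergence on the underlying probability space $(\Omega,\clF_T)$: the data-processing inequality (monotonicity of KL under restriction to a sub-$\sigma$-algebra) and the chain rule for KL divergence. The anchor is the observation supplied by Lemma~\ref{lm:change-of-Pmu-Pnu}, namely that the likelihood ratio $\frac{\ud \sP^\mu}{\ud \sP^\nu}$ on $\clF_T$ depends on $\omega$ only through $X_0(\omega)$; this lets us evaluate the KL divergence of the two full path laws exactly as $\kl(\mu\mid\nu)$, and the filter inequality then falls out by controlled lossy compression of $\clF_T$ down to the pair $(X_T,\clZ_T)$.

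Concretely, I would first use Lemma~\ref{lm:change-of-Pmu-Pnu} to compute
\[
\kl(\sP^\mu\mid \sP^\nu) \;=\; \E^\mu\Big[\log \tfrac{\ud \mu}{\ud \nu}(X_0)\Big] \;=\; \kl(\mu\mid \nu).
\]
Next, consider the sub-$\sigma$-algebra $\clG := \sigma(X_T) \vee \clZ_T \subset \clF_T$. The data-processing inequality yields
\[
\kl(\sP^\mu|_{\clG}\mid \sP^\nu|_{\clG}) \;\le\; \kl(\sP^\mu\mid \sP^\nu) \;=\; \kl(\mu\mid\nu).
\]
The crucial step is then the chain rule for KL divergence along the factorization $\clG = \sigma(X_T)\vee\clZ_T$: since by definition the regular conditional law of $X_T$ given $\clZ_T$ under $\sP^\mu$ (resp.~$\sP^\nu$) is $\pi_T^\mu$ (resp.~$\pi_T^\nu$),
\[
\kl(\sP^\mu|_{\clG}\mid \sP^\nu|_{\clG}) \;=\; \kl(\sP^\mu|_{\clZ_T}\mid \sP^\nu|_{\clZ_T}) \,+\, \E^\mu\big(\kl(\pi_T^\mu\mid \pi_T^\nu)\big).
\]
Dropping the first (non-negative) term and chaining the two displays yields the claimed bound.

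The step I expect to require the most care is the rigorous justification of the chain rule in this setting. One needs existence of regular conditional probabilities for $X_T$ given $\clZ_T$ under both $\sP^\mu$ and $\sP^\nu$ (guaranteed by the Polish-space assumption on $\bS$) and a verification that the abstract conditional KL produced by the chain rule coincides with the expression $\E^\mu\big(\kl(\pi_T^\mu\mid \pi_T^\nu)\big)$ in which $\pi_T^\nu$ is evaluated under $\sP^\mu$. The absolute continuity $\pi_T^\mu \ll \pi_T^\nu$ ($\sP^\mu$-a.s.) from Lemma~\ref{lm:change-of-Pmu-Pnu} ensures the integrand $\kl(\pi_T^\mu\mid \pi_T^\nu)$ is well-defined $\sP^\mu$-a.s., but measurability of this random variable in $\clZ_T$ and the correct matching of RN derivatives across the change of measure still require attention. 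Once those measure-theoretic technicalities are handled, the inequality is a one-line consequence of the identities above.
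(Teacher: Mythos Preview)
Your argument is correct and is precisely the standard information-theoretic route to this bound. Note, however, that the paper does not supply its own proof of this proposition: it is stated with attribution to \cite[Thm.~2.2]{clark1999relative} and no derivation appears in the appendix. The only related computation in the paper is the differential identity for $\E^\mu\big(\kl(\pi_t^\mu\mid\pi_t^\nu)\big)$ in the white-noise model (Sec.~4.3), obtained by taking expectations in Thm.~\ref{thm:div_filter}; that gives monotonicity---and hence the inequality---only for the special model $(\clA,h)$, not at the generality of Sec.~3 where the proposition sits.

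Your approach, by contrast, works for the general HMM of Sec.~3 and is in the same spirit as Clark's original argument. The paper remarks that \cite{clark1999relative} actually proves an \emph{equality}; your chain-rule decomposition
\[
\kl(\sP^\mu|_{\clG}\mid \sP^\nu|_{\clG}) \;=\; \kl(\sP^\mu|_{\clZ_T}\mid \sP^\nu|_{\clZ_T}) \;+\; \E^\mu\big(\kl(\pi_T^\mu\mid \pi_T^\nu)\big)
\]
is exactly the mechanism behind that stronger statement---you simply discard the observation-marginal term $\kl(\sP^\mu|_{\clZ_T}\mid \sP^\nu|_{\clZ_T})\ge 0$ to obtain the one-sided bound. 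The measure-theoretic caveats you flag (existence of regular conditional probabilities on the Polish space $\bS$, and the fact that $\sP^\mu|_{\clZ_T}\ll \sP^\nu|_{\clZ_T}$ so that the $\sP^\nu$-version of $\pi_T^\nu$ is defined $\sP^\mu$-a.s.) are the right ones and are routine to discharge here.
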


\medskip

The result in~\cite[Thm.~2.2]{clark1999relative} is given in a much stronger form, stated as an equality (which will require additional notation to state here).  The inequality suffices for our purpose. The inequality implies that
$\big\{\kl(\pi_t^\mu\mid\pi_t^\nu):t\ge 0\big\}$ is  a $\sP^\mu$-super-martingale. This is because upon conditioning on $\clZ_s$ for $s \le t$ and using~\eqref{eq:KL-Lyapunov} now with initializations $\pi_s^\mu$ and $\pi_s^\nu$ at time $s$,
\[
\E^\mu\big(\kl(\pi_t^\mu\mid\pi_t^\nu)\mid \clZ_s\big)  \le \kl(\pi_s^\mu\mid\pi_s^\nu),\quad 0\le s\le t,\;\sP^\mu\text{-a.s.} 
\]
Therefore, the K-L divergence is a candidate Lyapunov function for the filter, in the sense that $\E^\mu\big(\kl(\pi_t^\mu\mid\pi_t^\nu)\big)$ is non-increasing as a function of time $t$. However, it is generally regarded as a difficult problem to use these to show that, under appropriate conditions on the HMM, $\E^\mu\big(\kl(\pi_t^\mu\mid\pi_t^\nu)\big) \to 0$ as $t\to \infty$~\cite[Section 4.1]{chigansky2009intrinsic}.

\subsection{Analysis of the filter using total variation}
This section is adapted from papers on the intrinsic approach to the problem of filter stability~\cite{chigansky2009intrinsic}.  We begin with recalling the Bayes formula,
\[
\E^\mu\big(f(X_T)\mid \clZ_T\big) = \frac{\E^\nu\big(\frac{\ud \sP^\mu}{\ud \sP^\nu}f(X_T)\mid \clZ_T\big)}{\E^\nu\big(\frac{\ud \sP^\mu}{\ud \sP^\nu}\mid \clZ_T\big)}
\]
Since $\frac{\ud \sP^\mu}{\ud \sP^\nu} = \frac{\ud \mu}{\ud \nu}(X_0)$ (Lemma~\ref{lm:change-of-Pmu-Pnu}), the equation is expressed as follows:
\begin{align*}
	\int_\bS f(x)\ud \pi_T^\mu(x) &= \frac{\E^\nu\big(\frac{\ud \mu}{\ud \nu}(X_0)f(X_T)\mid \clZ_T\big)}{\E^\nu\big(\frac{\ud \mu}{\ud \nu}(X_0)\mid \clZ_T\big)}\\
	&= \E^\nu\Big(f(X_T) \frac{\E^\nu\big(\frac{\ud \mu}{\ud \nu}(X_0)\mid \clZ_T \vee \sigma\{X_T\}\big)}{\E^\nu\big(\frac{\ud \mu}{\ud \nu}(X_0)\mid \clZ_T\big)}\Big)\\
	&= \int_\bS f(x) \frac{\E^\nu\big(\frac{\ud \mu}{\ud \nu}(X_0)\mid \clZ_T, X_T = x\big)}{\E^\nu\big(\frac{\ud \mu}{\ud \nu}(X_0)\mid \clZ_T\big)} \ud \pi_T^\nu(x)
\end{align*}
This calculation shows that
\begin{equation*}\label{eq:gamma_T-explicit}
	 \frac{\ud \pi_T^\mu}{\ud \pi_T^\nu}(x) =  \frac{\E^\nu\big(\frac{\ud \mu}{\ud \nu}(X_0)\mid \clZ_T, X_T = x\big)}{\E^\nu\big(\frac{\ud \mu}{\ud \nu}(X_0)\mid \clZ_T\big)},\quad x\in\bS,\quad \sP^\mu\text{-a.s.}
\end{equation*}
The identity is the starting point of the intrinsic approach to the problem of filter stability.  The identity is used to define the \emph{forward map} $\gamma_0\mapsto \gamma_T$ as follows:
\begin{equation*}\label{eq:fmap}
	\gamma_T(x) = \E^\nu
	\Big( \frac{\gamma_0(X_0)}{\E^\nu(\gamma_0(X_0) \mid \clZ_T)}
        \,\bigg|\,\clZ_T\vee [X_T=x]\Big),\quad x\in \bS
\end{equation*}
where $\gamma_T:=\frac{\ud \pi_T^\mu}{\ud \pi_T^\nu}$ and $\gamma_0:=\frac{\ud \pi_T^\mu}{\ud \pi_T^\nu}$.  Based on the forward map, 
\begin{align*}
	\|\pi_T^\mu-\pi_T^\nu\|_\tv &= \int_\bS \big|\gamma_T(x)-1\big| \ud \pi_T^\nu(x)\\
	&=\frac{\E^\nu\Big(\big|\E^\nu\big(\gamma_0(X_0)\mid \clZ_T\vee \sigma\{X_T\}\big) - \E^\nu\big(\gamma_0(X_0)\mid \clZ_T\big)\big| \mid \clZ_T\Big)}{\E^\nu\big(\gamma_0(X_0)\mid \clZ_T\big)}
\end{align*}
Since $(X,Z)$ is a Markov process, $X_0$ and $\sigma(\{(X_t,Z_t):t\geq T\})$ are conditionally independent given $(X_T,Z_T)$. Therefore, the first term in the numerator can be expressed as
\[
\E^\nu\big(\gamma_0(X_0)\mid \clZ_T\vee \sigma\{X_T\}\big) = \E^\nu\big(\gamma_0(X_0)\mid \clZ_\infty\vee \clF^X_{[T,\infty)}\big) 
\]
where $\clZ_\infty=\bigcup_{T>0} \clZ_T$ and $\clF^X_{[T,\infty)} = \sigma\big(\{X_t: t \ge T\}\big)$ is the tail sigma algebra of the state process $X$.  Hence,
\begin{align*}
  \E^\nu\Big(\gamma_0(X_0) &\,\big|\, \clZ_T\Big)\|\pi_T^\mu-\pi_T^\nu\|_\tv \\
  & = \E^\nu\Big(\Big|\E^\nu\big(\gamma_0(X_0)\mid \clZ_\infty\vee \clF^X_{[T,\infty)}\big) - \E^\nu\big(\gamma_0(X_0)\mid \clZ_T\big)\Big|\,\big|\, \clZ_T\Big)
\end{align*}
Taking expectation $\E^\nu(\cdot)$ on both sides, noting $\frac{\ud \sP^\mu}{\ud \sP^\nu} = \gamma_0(X_0)$, yields 
\[
\E^\mu\big(\|\pi_T^\mu-\pi_T^\nu\|_\tv\big) = \E^\nu\Big(\,\Big|\E^\nu\big(\gamma_0(X_0)\mid \clZ_\infty\vee \clF^X_{[T,\infty)}\big) - \E^\nu\big(\gamma_0(X_0)\mid \clZ_T\big)\Big|\,\Big)
\]
Upon taking an expectation,
\[
\E^\mu\big(\|\pi_T^\mu-\pi_T^\nu\|_\tv\big) = \E^\nu\Big(\,\Big|\E^\nu\big(\gamma_0 (X_0)\mid \clZ_\infty\vee \clF^X_{[T,\infty)}\big) - \E^\nu\big(\gamma_0(X_0)\mid \clZ_T\big)\Big|\,\Big)
\]
Now, as a function of $T$, $\clZ_\infty\vee \clF^X_{[T,\infty)}$ is a decreasing filtration and $\clZ_T$ is an increasing filtration. Therefore, 
\begin{align*}
\lim_{T\to \infty} & \E^\mu\big(\|\pi_T^\mu-\pi_T^\nu\|_\tv\big) = \\
&\E^\nu\Big(\,\Big|\E^\nu\big(\gamma_0 (X_0)\mid \bigcap_{T\ge 0}\clZ_\infty\vee \clF^X_{[T,\infty)}\big) - \E^\nu\big(\gamma_0 (X_0)\mid \clZ_\infty\big)\Big|\, \Big)
\end{align*}
The right-hand side is zero if the following tail sigma field identity is satisfied:
\begin{equation*}\label{eq:kunita-tail-sigmafield}
	\bigcap_{T\ge 0} \clZ_\infty \vee \clF^X_{[T,\infty)} \stackrel{?}{=} \clZ_\infty %\vee \bigcap_{T\ge 0} \clF^X_{[T,\infty)}
\end{equation*}
This identity is referred to as the central problem in the stability analysis of the nonlinear filter~\cite{van2010nonlinear}. The problem generated large consequent attention (see~\cite{budhiraja2003asymptotic} and references therein).

\subsection{Analysis of the filter using  $\chi^2$-divergence}\label{sec:bmap_section}

The starting point of our own work on the topic of filter stability is the \emph{backward
  map} $\gamma_T\mapsto y_0$ defined as follows:
\begin{equation}\label{eq:bmap}
	y_0(x) :=  \E^\nu (\gamma_T(X_T)|[X_0=x]),\quad x\in\bS
\end{equation}
(we adopt here the convention that $\frac{0}{0}=0$). 
The function $y_0:\bS\to \Re$ is deterministic, non-negative, and 
$\nu(y_0) =  \E^\nu (\gamma_T(X_T)) = 1$, and therefore is also a likelihood ratio.

Since $\mu\ll\nu$, it follows $\mu(y_0) = \E^\mu\big(\gamma_T(X_T)\big)$. Using the tower property, 
\[
\mu(y_0) = \E^\mu\big(\gamma_T(X_T)\big) = \E^\mu\big(\pi_T^\mu(\gamma_T)\big) = \E^\mu\big(\pi_T^\nu(\gamma_T^2)\big)
\]
Noting that $\pi_T^\nu(\gamma_T^2)-1 = \chisq\big(\pi_T^\mu\mid\pi_T^\nu\big)$ is the $\chisq$-divergence, 
\[
\E^\mu\big(\chisq\big(\pi_T^\mu\mid\pi_T^\nu\big)\big) = \mu(y_0) -
\nu(y_0)
\]
Therefore, filter stability in the sense of $\chisq$-divergence is
equivalent to showing that $\mu(y_0) \stackrel{(T\to
  \infty)}{\longrightarrow} 1$. Because 
$
\mu(y_0)-\nu(y_0) = \nu\big((\gamma_0-1)(y_0-1)\big)
$, 
upon using the Cauchy-Schwarz inequality, 
\begin{equation}\label{eq:chisq_inequality}
	\E^\mu\big(\chi^2(\pi_T^\mu\mid\pi_T^\nu)\big)^2\leq \var^\nu(y_0(X_0)) \; \chi^2(\mu \mid \nu)
\end{equation}
where $\var^\nu(y_0(X_0))=\E^\nu\big( |y_0(X_0)-1|^2\big)$. Consequently, provided $\chi^2(\mu \mid \nu) <\infty$, filter stability follows from the following:
\begin{align*}\label{eq:VDP}
	&\textbf{(variance decay prop.)}\qquad  \var^\nu(y_0(X_0))  \stackrel{(T\to\infty)}{\longrightarrow}
	0 &
% \;\;\forall \mu\ll \nu&
\end{align*}
Next, from the definition of the backward map,
\[
(y_0(X_0) -1) =  \E^\nu ((\gamma_T(X_T)-1)|X_0),\quad \sP^\nu-\text{a.s.}
\]
and using
Jensen's inequality, 
\begin{equation}\label{eq:jensens_ineq}
	\var^\nu(y_0(X_0)) \le \var^\nu(\gamma_T(X_T))
\end{equation}
where $\var^\nu(\gamma_T(X_T)):=\E^\nu\big(|\gamma_T(X_T)-1|^2\big)$. 
Therefore, the backward map $\gamma_T\mapsto y_0$ is non-expansive.  That is, the variance of the random variable $y_0(X_0)$ is smaller than the
variance of the random variable $\gamma_T(X_T)$.  One might expect
that the 
variance decay property follows from showing that the backward map is
a strict contraction.  Based on the formulae for the Markov processes,
one might then also 
expect the (exponential) rate of contraction to be the suitably
defined generalization of the Poincar\'e
constant.       

The above program is being carried out for the HMM with white noise
observations~\cite{kim2024variance,kim2021ergodic,kim2021detectable,JinPhDthesis}.
A summary of the some results using this approach appear in
the~\Sec{sec:bmap_opt} after the model has been introduced.

\subsection{Application to stochastic thermodynamics}\label{sec:stoch_thermo_HMM}

In the presence of observations, the statement of the second law of thermodynamics becomes more nuanced. In 1867, physicist James Maxwell introduced the concept of Maxwell's demon, a hypothetical being that uses information to transform thermal energy into work, seemingly violating the second law of thermodynamics.
In \Sec{sec:maxwell_demon}, Maxwell's demon is illustrated with the aid of the over-damped Langevin HMM with white noise observations.

\section{Hidden Markov model with white noise observations}\label{sec:white-noise}

\subsection{Model and math preliminaries}

In this section, a special class of HMM 
is considered as follows:
\begin{itemize}
	\item $X=\text{Markov}(\clA,\mu)$ taking values in the state-space $\bS$. 
	\item $Z$ is given by the SDE
	\begin{equation} \label{eq:obs-model}
	Z_t = \int_0^t h(X_s)\ud s + W_t,\quad t\ge 0
	\end{equation}
	where the function $h:\bS\to \Re^m$ is referred to as the observation function and $W = \{W_t:0\le t\le T\}$ is an $m$-dimensional Brownian motion. It is assumed that $W$ is independent of $X$. 
\end{itemize}
The above is often referred to as the \emph{white noise observation model} of nonlinear filtering. The model is succinctly denoted by $(\clA, h)$.

Assuming a certain technical (Novikov) condition holds, the nonlinear filter solves the \emph{Kushner-Stratonovich equation}: 
\begin{equation}\label{eq:Kushner}
	\ud \pi_t(f) = \pi_t(\clA f) \ud t +
	\big(\pi_t(hf)-\pi_t(f)\pi_t(h)\big)^\tp \ud I_t
\end{equation}
where the \emph{innovation process} is defined by
\[
I_t := Z_t - \int_0^t \pi_s(h)\ud s,\quad t \ge 0
\]

The two conditional measures of interest, $\{\pi_t^\mu : t\geq 0\}$ and $\{\pi_t^\nu : t\geq 0\}$, are solutions of~\eqref{eq:Kushner}, starting from initialization $\pi_0=\mu$ and $\pi_0=\nu$, respectively. The respective innovations processes are denoted as $\{I_t^\mu : t\geq 0\}$ and $\{I_t^\nu : t\geq 0\}$.  From filtering theory, these are known to be Wiener Processes, with respect to $\sP^\mu$ and $\sP^\nu$, respectively. 

\subsection{Formulae for $f$-divergence}

For the HMM $(\clA,h)$, explicit formulae for the
              two types of f-divergences appear in the following
              theorem.  (The total variation is not $C^1$ in its
              argument and therefore not amenable to taking the
              derivative). 
\begin{theorem}\label{thm:div_filter}
\begin{align} 
\ud\,\kl(\pi_t^\mu\mid\pi_t^\nu) &= -\frac{1}{2}\pi_t^\nu\Big(\frac{\Gamma \gamma_t}{\gamma_t}\Big) \ud t - \frac{1}{2}|\pi_t^\mu(h)-\pi_t^\nu(h)|^2\ud t \label{eq:rate-kl-Markov} \\
&\qquad + \big[\pi_t^\mu\big(\log\gamma_t (h-\pi_t^\mu(h))\big) - \big(\pi_t^\mu(h) - \pi_t^\nu(h)\big)\big]^\tp\ud I_t^\mu  \nonumber \\
\ud\,\chisq(\pi_t^\mu\mid\pi_t^\nu) &=-\pi_t^\nu\big(\Gamma
                                      \gamma_t\big) \ud t 
- \clV_t^\mu (\gamma_t,h)^\tp \clV_t^\nu (\gamma_t,h) \ud t
\label{eq:rate-chisq-Markov} \\
&\qquad + \pi_t^\mu\big(\gamma_t(h-\pi_t^\mu)-\gamma_t(\pi_t^\mu(h)-\pi_t^\nu(h))\big)^\tp\ud I_t^\mu \nonumber
\end{align}
\end{theorem}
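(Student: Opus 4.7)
The plan is to derive a stochastic differential equation for the random likelihood ratio $\gamma_t = \ud\pi_t^\mu/\ud\pi_t^\nu$ and then apply It\^o's formula to obtain the announced evolutions, writing the divergences as $\kl(\pi_t^\mu\mid\pi_t^\nu) = \pi_t^\nu(\gamma_t\log\gamma_t)$ and $\chisq(\pi_t^\mu\mid\pi_t^\nu) = \pi_t^\nu((\gamma_t-1)^2)$.

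First I would derive the SDE for $\gamma_t$. Starting from the identity $\pi_t^\mu(f) = \pi_t^\nu(\gamma_t f)$, valid for all bounded measurable $f$ by \Lemma{lm:change-of-Pmu-Pnu}, I would write the Kushner--Stratonovich equation~\eqref{eq:Kushner} for both $\pi_t^\mu$ and $\pi_t^\nu$, re-express $\ud I_t^\nu$ in terms of $\ud I_t^\mu$ via $\ud I_t^\nu = \ud I_t^\mu + (\pi_t^\mu(h)-\pi_t^\nu(h))\,\ud t$ (valid under $\sP^\mu$), and postulate an SDE of the form $\ud\gamma_t(x) = a_t(x)\,\ud t + b_t(x)^\tp\,\ud I_t^\mu$ for unknown coefficients $a_t,b_t$. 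Applying the It\^o product rule for the random measure $\pi_t^\nu$ paired with the random function $\gamma_t f$ and matching the $\ud t$ and $\ud I_t^\mu$ coefficients against the Kushner equation for $\pi_t^\mu(f)$, for arbitrary $f$, uniquely identifies $a_t$ and $b_t$.

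With the SDE for $\gamma_t$ in hand, I would apply It\^o's formula pathwise to $\phi(\gamma_t(x))$ with $\phi(u) = u\log u$ for KL and $\phi(u) = (u-1)^2$ for $\chi^2$. Then, writing the divergence as $\pi_t^\nu(\phi(\gamma_t))$, I would apply the It\^o product rule a second time to combine the SDE for $\pi_t^\nu$ with the pathwise evolution of $\phi(\gamma_t)$ and the cross variation of their martingale parts. The generator action $\pi_t^\nu(\clA\phi(\gamma_t))$ is handled via the diffusion chain rule $\clA(\phi(\gamma)) = \phi'(\gamma)\clA\gamma + \tfrac{1}{2}\phi''(\gamma)\Gamma(\gamma,\gamma)$, exactly as in the proof of \Theorem{thm:div_MP}, yielding the $-\tfrac{1}{2}\pi_t^\nu(\Gamma\gamma_t/\gamma_t)$ term in~\eqref{eq:rate-kl-Markov} and the $-\pi_t^\nu(\Gamma\gamma_t)$ term in~\eqref{eq:rate-chisq-Markov}.

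The observation-correction drift terms $-\tfrac{1}{2}|\pi_t^\mu(h)-\pi_t^\nu(h)|^2$ and $-\clV_t^\mu(\gamma_t,h)^\tp\clV_t^\nu(\gamma_t,h)$ will emerge from combining three contributions: (i) the quadratic variation $\tfrac{1}{2}\pi_t^\nu(\phi''(\gamma_t)\,b_t^\tp b_t)\,\ud t$ coming from the It\^o expansion of $\phi(\gamma_t)$; (ii) the cross variation between the Kushner martingale of $\pi_t^\nu$ and the martingale part of $\gamma_t$; and (iii) the drift contribution from expressing the $\pi_t^\nu$-Kushner equation under $\sP^\mu$ through the innovation change above. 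The main obstacle I expect is the careful bookkeeping of these three contributions and verifying that they collapse precisely into the stated quadratic forms, while the residual $\ud I_t^\mu$ coefficient correctly assembles into the displayed martingale terms; carrying out all computations under $\sP^\mu$ with $I_t^\mu$ as the driving Brownian motion throughout should keep the accounting tractable.
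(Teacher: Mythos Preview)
Your proposal is correct and rests on the same ingredients as the paper's proof: the Kushner--Stratonovich dynamics for both filters, the innovation change $\ud I_t^\nu = \ud I_t^\mu + (\pi_t^\mu(h)-\pi_t^\nu(h))\,\ud t$, and the carr\'e du champ identity from \Theorem{thm:div_MP}. The only organizational difference is that the paper does not first isolate an SDE for $\gamma_t$ and then apply a product rule; instead it writes $\kl(\pi_t^\mu\mid\pi_t^\nu) = \int \pi_t^\mu(x)\log\pi_t^\mu(x) - \pi_t^\mu(x)\log\pi_t^\nu(x)\,\ud x$ (respectively $\chisq = \int \pi_t^\mu(x)^2/\pi_t^\nu(x)\,\ud x - 1$) and applies It\^o's rule in one pass at the density level, exactly mirroring the Markov-case computation with additional second-order corrections from the Kushner martingale parts. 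Your three-step route (SDE for $\gamma_t$, then $\phi(\gamma_t)$, then pair with $\pi_t^\nu$) is a valid reorganization that makes the role of $\gamma_t$ more explicit, at the cost of the extra coefficient-matching step; the paper's one-pass version is shorter but requires comfort with It\^o calculus applied directly to density-valued processes.
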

where $\clV_t^\mu (\gamma_t,h) = \pi_t^\mu(\gamma_t h) -
\pi_t^\mu(\gamma_t)\pi_t^\mu(h)$ and $\clV_t^\nu (\gamma_t,h) = \pi_t^\nu(\gamma_t h) -
\pi_t^\nu(\gamma_t)\pi_t^\nu(h)$. 
\begin{proof}
See Appendix~\ref{apdx:pf-thm-div-filter}.
\end{proof}

\subsection{Difficulties with application to filter stability}

Parallel to the discussion for stochastic stability in \Sec{sec:MP}, one may seek to use the formulae in \Thm{thm:div_filter} to extract conclusions on filter stability.  Note that the first term of each of the two equations, 
$-\frac{1}{2}\pi_t^\nu\Big(\frac{\Gamma \gamma_t}{\gamma_t}\Big)$ in~\eqref{eq:rate-kl-Markov} 
and $-\pi_t^\nu\big(\Gamma\gamma_t\big)$
in~\eqref{eq:rate-chisq-Markov}, has an analogue in
Thm.~\ref{thm:div_MP}.  Both of these are non-positive terms that capture the effect due to the
dynamics. 

\medskip

\noindent \textbf{Filter stability using KL-divergence.} Upon taking
an expectation of both sides, noting that $\{I_t^\mu:t\geq 0\}$ is a
$\sP^\mu$-martingale, 
\[
\frac{\ud}{\ud t} \E^\mu \left(\kl(\pi_t^\mu\mid\pi_t^\nu) \right) =
-\frac{1}{2}\E^\mu \left(\pi_t^\nu\Big(\frac{\Gamma
    \gamma_t}{\gamma_t}\Big) \right) -\frac{1}{2}\E^\mu \left(|\pi_t^\mu(h)-\pi_t^\nu(h)|^2\right)
\]
Each of the two terms on the right-hand side is non-positive which
shows that $\{\kl(\pi_t^\mu\mid\pi_t^\nu):t\geq 0\}$ is a
$\sP^\mu$-super-martingale.  (This is consistent with the conclusion
obtained previously from \Prop{prop:clark-main-result}).  However, the formula has
not been very useful to obtain meaningful insights on model properties
(for $(\clA,h)$) 
such that the divergence converges to zero.    

\medskip

\noindent \textbf{Analysis using $\chi^2$-divergence.} The term $-\pi_t^\nu(\Gamma \gamma_t)$ on the right-hand
side of~\eqref{eq:rate-chisq-Markov} is non-positive. However, the
second term $\clV_t^\mu (\gamma_t,h)^\tp \clV_t^\nu (\gamma_t,h)$ can be
either positive or negative. Furthermore, the second term scales with $h$ and
therefore it may become arbitrarily large.  Therefore,
        the equation has not been useful for 
        the asymptotic analysis of the $\chisq$-divergence.

\medskip

The analysis of filter stability in the sense of $\chi^2$-divergence
is made possible
based on the use of the backward map~\eqref{eq:bmap} introduced
in~\Sec{sec:bmap_section}..  This is our original work which is
summarized in the following section.

\subsection{Backward map and the analysis of $\chisq$-divergence}\label{sec:bmap_opt}

In this section, we continue the analysis of the backward
map $\gamma_T \mapsto y_0$ introduced as equation~\eqref{eq:bmap} 
in~\Sec{sec:bmap_section}.  For this purpose, consider the backward
stochastic differential equation (BSDE):
\begin{align}
	-\ud Y_t(x) &= \big((\clA Y_t)(x) + h^\tp(x)V_t(x)\big)\ud t -
                      V_t^\tp(x) \ud Z_t, \nonumber\\
	\quad Y_T(x) &= \gamma_T(x),\;\; x\in\bS, \;\;0\leq t\leq T \label{eq:closed-loop-bsde-2}
\end{align}
Here $(Y,V) = \{(Y_t(x),V_t(x)):\Omega \to \Re\times \Re^m
\;:\; x\in\bS, \;0\leq t\leq T\}$ is a $\clZ$-adapted solution of the
BSDE for a prescribed $\clZ_T$-measurable terminal condition
$Y_T=\gamma_T=\frac{\ud 
		\pi^\mu_T}{\ud \pi^\nu_T}$.  (There is a well established existence,
uniqueness, and regularity theory for the BSDE;
cf.,~\cite{Zhang2017}.) For additional motivation concerning the BSDE,
the reader is referred to~\cite{JinPhDthesis,duality_jrnl_paper_I,duality_jrnl_paper_II}.

For the HMM $(\clA,h)$, the relationship between the BSDE~\eqref{eq:closed-loop-bsde-2} and the backward map~\eqref{eq:bmap} is given
in the following proposition:

\begin{proposition}\label{prop:backward-map-and-bsde}
Consider~\eqref{eq:closed-loop-bsde-2}.  Then at
        time $t=0$,
\begin{equation*}\label{eq:bmap-and-bsde}
Y_0(x)=y_0(x) ,\quad x \in \bS
\end{equation*}
where $y_0$ is according to the backward map~\eqref{eq:bmap}.  Also,
\begin{equation}\label{eq:var_contractive_ext}
\frac{\ud }{\ud t} \var^\nu (Y_t(X_t)) = \E^\nu \big(
  \pi_t^\nu(\Gamma Y_t) +
  \pi_t^\nu(|V_t|^2) \big),\quad 0\leq t\leq T
\end{equation}
where $\var^\nu(Y_t(X_t)) :=\E^\nu(|Y_t(X_t)-1|^2)$ and $
\pi_t^\nu(|V_t|^2):=\int_\bS V^\tp_t(x) V_t(x)\ud \pi_T^\nu(x) $. 
\end{proposition}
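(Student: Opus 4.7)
The plan is to apply a generalized It\^o-Wentzell chain rule to the composite process $Y_t(X_t)$, where $Y_t(\cdot)$ is the $\clZ$-adapted random field determined by the BSDE~\eqref{eq:closed-loop-bsde-2} and $X_t$ is the Markov state with generator $\clA$. Differentiating in $t$ produces three contributions: the $t$-differential of $Y_t(x)$ at $x=X_t$, which from the BSDE equals $-((\clA Y_t)(X_t)+h^\tp(X_t)V_t(X_t))\ud t+V_t^\tp(X_t)\ud Z_t$; the spatial It\^o correction $(\clA Y_t)(X_t)\ud t+\ud M_t$, where $M$ is a $\sP^\nu$-martingale arising from the $X$-motion with $\ud[M]_t=\Gamma Y_t(X_t)\ud t$ (by the carr\'e du champ identity); and a cross-variation term that vanishes, since under $\sP^\nu$ the Brownian motion $W$ in $Z_t=\int_0^t h(X_s)\ud s+W_t$ is independent of $X$.

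Substituting $\ud Z_t=h(X_t)\ud t+\ud W_t$, the $-h^\tp V_t$ drift from the BSDE piece cancels the $V_t^\tp h$ drift inside $V_t^\tp\ud Z_t$, and the two $\clA Y_t$ terms cancel between the BSDE and the forward It\^o correction. What remains is the pure martingale increment $\ud Y_t(X_t)=V_t^\tp(X_t)\ud W_t+\ud M_t$, so $\{Y_t(X_t)\}_{t\in[0,T]}$ is a $\sP^\nu$-(local) martingale. Since $\clZ_0$ is $\sP^\nu$-trivial, $Y_0$ is a deterministic function of $x$; conditioning on $X_0=x$ and using the terminal condition $Y_T=\gamma_T$ yields $Y_0(x)=\E^\nu(\gamma_T(X_T)\mid X_0=x)=y_0(x)$, which is part~(i).

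For part~(ii), the martingale property just established gives $\E^\nu(Y_t(X_t))=\E^\nu(Y_0(X_0))=\nu(y_0)=1$, and hence $\var^\nu(Y_t(X_t))=\E^\nu(Y_t(X_t)^2)-1$. Applying It\^o to $Y_t(X_t)^2$ and once more invoking orthogonality of the $W$-driven and $X$-driven martingale parts gives
$$
\ud(Y_t(X_t)^2)=2Y_t(X_t)\ud Y_t(X_t)+\bigl(|V_t(X_t)|^2+\Gamma Y_t(X_t)\bigr)\ud t.
$$
Taking $\E^\nu(\cdot)$ kills the martingale increments, and conditioning through $\clZ_t$ (tower property) rewrites the remaining drift as $\E^\nu\bigl(\pi_t^\nu(\Gamma Y_t)+\pi_t^\nu(|V_t|^2)\bigr)\ud t$, which is~\eqref{eq:var_contractive_ext}.

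The main technical obstacle is rigorously justifying the It\^o-Wentzell chain rule for a $\clZ$-adapted random field $(Y_t(x),V_t(x))$ composed with the Markov process $X_t$ in the general Feller setting: one must verify sufficient spatial regularity of $Y_t$ within the domain $\clD$ from~\Sec{ssec:model-X} so that $\clA Y_t$ is applied pathwise in a legitimate way, control integrability to promote local martingales to true $\sP^\nu$-martingales so that the expectations really kill them, and correctly identify the quadratic variation of the $X$-induced martingale part with $\Gamma Y_t(X_t)\ud t$. Under the BSDE regularity theory of~\cite{Zhang2017} together with a standard localization argument this is manageable, but it is where the real work lies.
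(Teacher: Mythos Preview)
Your proposal is correct and follows essentially the same approach as the paper's own proof: apply the It\^o--Wentzell rule to $Y_t(X_t)$, observe that the $\clA Y_t$ and $h^\tp V_t$ drifts cancel so that $\ud Y_t(X_t)=V_t^\tp(X_t)\ud W_t+\ud M_t$ is a $\sP^\nu$-martingale, deduce $Y_0(x)=\E^\nu(\gamma_T(X_T)\mid X_0=x)=y_0(x)$, and then read off~\eqref{eq:var_contractive_ext} from the quadratic variation (the paper does this last step by integrating and then differentiating, while you apply It\^o to the square directly, but these are equivalent). Your explicit discussion of the technical caveats (regularity in $\clD$, localization, orthogonality of the $W$- and $X$-driven martingale parts) is more careful than the paper, which leaves these implicit.
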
 

\begin{proof}
See Appendix~\ref{apdx:pf-prop-bmap-bsde}.
\end{proof}
 
\begin{remark}[Relationship to~\eqref{eq:jensens_ineq}]
Upon integrating~\eqref{eq:var_contractive_ext},
\begin{align} 
\var^\nu(Y_0(X_0)) & + \E^\nu \Big(\int_0^T \pi_t^\nu(\Gamma Y_t) +
  \pi_t^\nu(|V_t|^2) \ud t \Big) = \var^\nu(\gamma_T(X_T)) \label{eq:contraction_bmap2}
\end{align}
which is the stronger form of the inequality
$\var^\nu(y_0(X_0))\leq \var^\nu(\gamma_T(X_T))$ 
first obtained (see~\eqref{eq:jensens_ineq}) using the Jensen's inequality. 
\end{remark}

Either of the two equations,~\eqref{eq:var_contractive_ext}
or~\eqref{eq:contraction_bmap2}, is useful to show the variance decay
property under suitable additional conditions on the model.  The most
straightforward of these conditions is the conditional Poincar\'e
inequality (c-PI) introduced in an early paper on this topic:
	\begin{equation*}\label{eq:c-PI}
		\text{(c-PI)}\qquad\qquad	\pi_t^\bmu (\Gamma f) \geq c \;
		 \pi_t^\bmu (|f- \pi_t^\bmu(f)|^2)\quad \forall\, f \in\clD,\; t\geq 0
	\end{equation*}

Based upon c-PI, the following result is shown:

\begin{proposition}\label{prop:functional-inequality}
Suppose c-PI holds with constant $c$.  Let $\underline{a} :=
	\mathop{\operatorname{essinf}}_{x\in\bS} \frac{\ud \mu}{\ud \bmu} (x)$. Then
	\[
	\E^\mu\big(\chisq(\pi_T^\mu\mid\pi_T^\bmu)\big) \le
	\frac{1}{\underline{a}} e^{-cT} \; \chisq(\mu\mid\bmu)
	\]
\end{proposition}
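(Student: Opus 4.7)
The plan is to combine the BSDE representation of the backward map from~\Prop{prop:backward-map-and-bsde} with c-PI to establish a Gronwall contraction for $\var^\bmu(Y_t(X_t))$, and then feed this into the Cauchy-Schwarz bound~\eqref{eq:chisq_inequality} while exploiting the lower bound $\gamma_0 \ge \underline{a}$ to close the estimate self-consistently.

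\textbf{Step 1 (Gronwall via c-PI).} Starting from~\eqref{eq:var_contractive_ext} and applying c-PI in the conditional form $\pi_t^\bmu(\Gamma Y_t) \ge c\, \pi_t^\bmu((Y_t - \pi_t^\bmu(Y_t))^2)$, I would first establish that $N_t := \pi_t^\bmu(Y_t)$ is a $\sP^\bmu$-martingale. This follows by applying It\^o's product rule to the Kushner-Stratonovich equation~\eqref{eq:Kushner} for $\pi_t^\bmu$ together with the BSDE~\eqref{eq:closed-loop-bsde-2} for $Y_t$: the two $\pi_t^\bmu(\clA Y_t)$ drift contributions cancel and the residual cross-variations recombine into a single stochastic integral against $\ud I_t^\bmu$. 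Because $N_T = \pi_T^\bmu(\gamma_T) = 1$, the martingale property forces $N_t \equiv 1$. Writing $\psi(t) := \var^\bmu(Y_t(X_t))$ and noting that $N_t \equiv 1$ combined with $\E^\bmu(Y_t(X_t)) = 1$ collapses the law of total variance so that $\psi(t) = \E^\bmu\bigl(\pi_t^\bmu((Y_t - \pi_t^\bmu(Y_t))^2)\bigr)$, c-PI together with~\eqref{eq:var_contractive_ext} gives $\psi'(t) \ge c\,\psi(t)$. Gronwall then yields the contraction
\[
  \var^\bmu(y_0(X_0)) \;\le\; e^{-cT}\,\var^\bmu(\gamma_T(X_T))
\]

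\textbf{Step 2 (Terminal variance via $\underline{a}$, and assembly).} A short tower-property calculation gives the identity $\var^\bmu(\gamma_T(X_T)) = \E^\bmu(\pi_T^\bmu(\gamma_T^2)) - 1 = \E^\bmu(\chi^2(\pi_T^\mu\mid\pi_T^\bmu))$. From Lemma~\ref{lm:change-of-Pmu-Pnu} and $\gamma_0 \ge \underline{a}$, for any nonnegative $\clZ_T$-measurable $f$ one has $\E^\bmu(f) = \E^\mu(f/\gamma_0(X_0)) \le \underline{a}^{-1}\E^\mu(f)$; applied with $f = \chi^2(\pi_T^\mu\mid\pi_T^\bmu)$ this gives $\var^\bmu(\gamma_T(X_T)) \le \underline{a}^{-1}\E^\mu(\chi^2(\pi_T^\mu\mid\pi_T^\bmu))$. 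Substituting the Step~1 bound and this estimate into~\eqref{eq:chisq_inequality} yields
\[
  \E^\mu(\chi^2(\pi_T^\mu\mid\pi_T^\bmu))^2 \;\le\; \underline{a}^{-1} e^{-cT}\, \E^\mu(\chi^2(\pi_T^\mu\mid\pi_T^\bmu))\, \chi^2(\mu\mid\bmu)
\]
and dividing through by $\E^\mu(\chi^2(\pi_T^\mu\mid\pi_T^\bmu))$ (the case where it vanishes being trivial) gives the stated inequality.

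The hard part will be the martingale identity $\pi_t^\bmu(Y_t) \equiv 1$ invoked in Step~1. It requires carrying out It\^o's product rule carefully, with the BSDE martingale term $V_t^\tp\ud Z_t$ and the Kushner gain term $(\pi_t^\bmu(hY_t) - \pi_t^\bmu(Y_t)\pi_t^\bmu(h))^\tp\ud I_t^\bmu$ combining --- after substituting $\ud Z_t = \pi_t^\bmu(h)\ud t + \ud I_t^\bmu$ --- into a single $\sP^\bmu$-martingale with all finite-variation contributions cancelling. Once this cancellation is verified, Steps~2 and~3 are routine manipulations with the change of measure and~\eqref{eq:chisq_inequality}.
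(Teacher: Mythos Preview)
Your proposal is correct and follows essentially the same route as the paper's proof: Gronwall via c-PI on~\eqref{eq:var_contractive_ext}, the identity $\var^\bmu(\gamma_T(X_T))=\E^\bmu(\chisq(\pi_T^\mu\mid\pi_T^\bmu))$, the change-of-measure bound $\E^\bmu(\cdot)\le\underline{a}^{-1}\E^\mu(\cdot)$, and then cancellation in~\eqref{eq:chisq_inequality}. The one place you add detail is the identity $\pi_t^\bmu(Y_t)\equiv 1$ needed to collapse the law of total variance; this is indeed required for the Gronwall step, but your It\^o product-rule computation is heavier than necessary---since $Y_t(x)=\E^\bmu(\gamma_T(X_T)\mid\clZ_t\vee[X_t=x])$, the tower property gives directly $\pi_t^\bmu(Y_t)=\E^\bmu(\gamma_T(X_T)\mid\clZ_t)=\E^\bmu(\pi_T^\bmu(\gamma_T)\mid\clZ_t)=1$.
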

\begin{proof}
See Appendix~\ref{apdx:pf-filter-stability}.
\end{proof}

\begin{remark}
In the conference paper~\cite{kim2021ergodic}, a few cases of HMM are
described where the c-PI can be verified.  This includes the case
where the Markov process is Doeblin.  In a more recent paper~\cite{kim2024variance}, a weaker
definition of the Poincar\'e inequality is introduced and related to
the HMM model properties, namely, observability of $(\clA,h)$ and
ergodicity of $\clA$. The definition is used to show filter
stability under conditions weaker than the c-PI.  The reader is
referred to~\cite{kim2024variance} for results in this direction. 
\end{remark}

\subsection{Application to stochastic thermodynamics}\label{sec:maxwell_demon}

For the purposes of illustrating the Maxwell's demon, consider the
time-varying over-damped Langevin model~\eqref{eq:dyn_sde_TV}
together with observations according to~\eqref{eq:obs-model}.  The
resulting 
Langevin HMM
is as follows:
\begin{subequations}\label{eq:Langevin-obs-model}
	\begin{align}
		\ud X_t &= -\nabla U_t (X_t) \ud t + \sqrt{2}\ud B_t, \quad
		X_0\sim \mu \\
		\ud Z_t  &= h(X_t) \ud t + \ud W_t, \quad Z_0=0
	\end{align}
\end{subequations}
where $X_0,B,W$ are assumed to be mutually independent.
The distinction from the time-varying model~\eqref{eq:dyn_sde_TV} (as discussed in \Sec{sec:2nd-law}) lies in the fact that the potential $U_t(\cdot)$ is now allowed to be a $\clZ_t$-measurable function. In other words, the demon can select the potential function at time based on observations made up to that time. In contrast, in \Sec{sec:2nd-law}, the potential function is time-varying but deterministic.

To introduce a modified version of the second law (particularly equation~\eqref{eq:2nd-law-ineq}) that accounts for the demon’s ability to select a potential based on prior observations, we consider the nonlinear filter $\{\pi^\mu_t:t\geq 0\}$ and use it to define the following thermodynamic quantities:  
\begin{align*}
\text{(conditional free energy):} \qquad \qquad
	\Delta \mathcal F& = \mathcal F(\pi^\mu_T, U_T) -  \mathcal F(\mu, U_0)  \\
	\text{(conditional work):} \qquad \qquad	\mathcal W_{T} &:= \int_0^{T} \pi_t^\mu(\frac{\partial}{\partial t} U_t) \ud t\\ 
	\text{(dissipation):}\qquad\qquad\quad \cal{D} & :=  \E^\mu \left(
	\int_0^{T} \fish(\pi_t^\mu \mid \nu_t  ) \ud t \right) \\
	\text{(mutual information):}\qquad 
	\mathcal I(X,Z) & =\frac{1}{2}\E^\mu \left(\int_0^{T} \mathsf |h(X_t)
	-\pi_t^\mu(h)|^2 \ud  t \right)
\end{align*} 
where, as before, $\nu_t$ is the Boltzmann measure defined based on $U_t$ (That is, $\frac{\ud \nu_t}{\ud x}(x) \propto e^{-U_t(x)}$.) 
The main result of~\cite{taghvaei2021relation} is as follows:
\begin{theorem}\label{thm:2nd-law-obs}
	Consider the over-damped Langevin HMM~\eqref{eq:Langevin-obs-model}. Then, 
	\begin{align}\label{eq:2nd-law-with-obs}
		 \E^\mu ( 	\mathcal W_{T}  - \Delta \mathcal F ) = \mathcal D - \mathcal I(X,Z)  
	\end{align}
\end{theorem}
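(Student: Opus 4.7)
The strategy is to compute the stochastic differential $\ud \mathcal{F}(\pi_t^\mu, U_t)$ along the filter evolution, then integrate over $[0,T]$ and take $\E^\mu$ to eliminate the martingale contribution. Starting from the decomposition
\[
\mathcal{F}(\pi_t^\mu, U_t) \;=\; \pi_t^\mu(U_t) - \mathcal{S}(\pi_t^\mu) \;=\; \KL(\pi_t^\mu\mid \nu_t) - \log C_t,
\]
(which follows from $\log\tfrac{\ud \nu_t}{\ud x} = -U_t - \log C_t$), the key ingredient is the Kushner--Stratonovich SPDE for the filter density $p_t := \ud\pi_t^\mu/\ud x$,
\[
\ud p_t \;=\; \clA_t^* p_t \, \ud t \;+\; p_t \big(h - \pi_t^\mu(h)\big)^\tp \ud I_t^\mu,
\]
where $\clA_t^*$ is the Fokker--Planck operator (formal adjoint of $\clA_t$).

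\textbf{Core calculation.} Applying It\^o's formula to the functional $\mathcal{F}(\pi,U) = \int p\, U\, \ud x + \int p \log p\, \ud x$ produces three contributions. (i) The explicit $t$-derivative of $U_t$ gives $\pi_t^\mu(\partial_t U_t)\, \ud t$. (ii) The first-order terms involving the drift of $\ud p_t$ combine, after integration by parts, to $\int p_t\, \clA_t(U_t + \log p_t)\, \ud x\, \ud t$; since $U_t + \log p_t$ equals $\log(p_t/q_t)$ up to the additive constant $\log C_t$ (where $q_t = \ud\nu_t/\ud x$), which $\clA_t$ annihilates, a second integration by parts using $\nabla(U_t+\log p_t)=\nabla\log(p_t/q_t)$ reduces this to $-\int p_t\,|\nabla \log(p_t/q_t)|^2\, \ud x\, \ud t = -\fish(\pi_t^\mu\mid\nu_t)\, \ud t$. (iii) The second-order It\^o term, arising from the strict convexity of $p\mapsto p\log p$ (with $f''(p)=1/p$) and the quadratic variation $\ud\langle p_t(x)\rangle = p_t(x)^2 |h(x) - \pi_t^\mu(h)|^2\, \ud t$, contributes $\half\,\pi_t^\mu(|h - \pi_t^\mu(h)|^2)\, \ud t$. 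Collecting terms,
\[
\ud\mathcal{F}(\pi_t^\mu, U_t) = \Big[-\fish(\pi_t^\mu\mid\nu_t) + \pi_t^\mu(\partial_t U_t) + \half\,\pi_t^\mu(|h - \pi_t^\mu(h)|^2)\Big]\, \ud t + \ud M_t,
\]
with $M_t$ a $\sP^\mu$-martingale driven by $I^\mu$.

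\textbf{Identification of terms.} Integrating on $[0,T]$ and taking $\E^\mu$ kills $M$ and yields
\[
\E^\mu(\Delta \mathcal{F}) \;=\; -\mathcal{D} \;+\; \E^\mu(\mathcal{W}_T) \;+\; \mathcal{I}(X,Z),
\]
where the mutual-information identification uses the tower property $\E^\mu[\pi_t^\mu(|h - \pi_t^\mu(h)|^2)] = \E^\mu[|h(X_t) - \pi_t^\mu(h)|^2]$. Rearranging produces the asserted identity $\E^\mu(\mathcal{W}_T - \Delta\mathcal{F}) = \mathcal{D} - \mathcal{I}(X,Z)$.

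\textbf{Main obstacle.} The principal technical difficulty is making the It\^o calculus for the non-Lipschitz functional $\int p_t \log p_t\, \ud x$ on the measure-valued semimartingale $\pi_t^\mu$ rigorous: strict positivity and sufficient regularity/decay of $p_t$ are required to justify the integration by parts and suppress boundary contributions at infinity, which in turn demands growth/regularity hypotheses on $U_t$ and on the prior $\mu$. A secondary subtlety concerns the $\clZ_t$-adapted potential $U_t$ in the Maxwell-demon setting: if the demon's protocol carries a nontrivial martingale component relative to $\ud I^\mu$, additional It\^o cross-terms between $\ud U_t$ and $\ud p_t$ appear and must be shown to vanish in $\E^\mu$ or be absorbed into an appropriately extended definition of the work $\mathcal{W}_T$.
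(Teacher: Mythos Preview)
Your proposal is correct and follows essentially the same route as the paper: apply It\^o's rule to $\mathcal{F}(\pi_t^\mu,U_t)=\pi_t^\mu(U_t)-\mathcal{S}(\pi_t^\mu)$ along the Kushner--Stratonovich evolution, identify the three drift contributions $\pi_t^\mu(\partial_t U_t)$, $-\fish(\pi_t^\mu\mid\nu_t)$, and $\tfrac12\,\pi_t^\mu(|h-\pi_t^\mu(h)|^2)$, then integrate over $[0,T]$ and take $\E^\mu$. The paper's version is terser---it packages your integration-by-parts step (ii) as the one-line identity $\pi_t^\mu(\clA_t U_t+\clA_t\log\pi_t^\mu)=\nu_t(\gamma_t\,\clA_t\log\gamma_t)=-\fish(\pi_t^\mu\mid\nu_t)$ and does not comment on the regularity or adapted-potential issues you raise.
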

\begin{proof}
See Appendix~\ref{apdx:pf-2nd-law-obs}.
\end{proof}
Compared to~\eqref{eq:2nd-law-ineq}, the equation~\eqref{eq:2nd-law-with-obs} involves an additional term because of
mutual information $ \mathcal I(X,Z)$.  While the dissipation $\clD$
is positive, the additional term may cause $\mathsf \E^\mu (
\mathcal W_{T}  - \Delta \mathcal F )$ to become negative. Derivation
of similar types of equalities and inequalities, under different
settings of the model, has been subject of recent research in the stochastic thermodynamic literature~\cite{sagawa2008discreteqmeas,Horowitz_2014,parrondo2021extracting,taghvaei2021relation}. 

\subsection{Acknowledgement}

This work is supported in part by the AFOSR award FA9550-23-1-0060 and the NSF award 2336137 (Mehta), the NSF award EPCN-2318977 (Taghvaei), and the DFG grant 318763901/SFB1294 (Kim).  A portion of this
work was conducted during authors' (Taghvaei and Mehta) visit to the
Banff International Research Station as part of the ``Geometry, Topology
and Control System Design'' workshop in Banff from June 11 to June 16, 2023.

\appendix

\section{Proofs}

\subsection{Proof of Theorem~\ref{thm:div_MP}}\label{apdx:pf-thm-div-MP}

\newP{Computation for KL divergence}
%Using the den
\[
\kl\big(\mu_t\mid\nu_t\big) = \int_\bS \ud \mu_t(x)\log \mu_t(x) - \ud
\mu_t(x) \log \nu_t(x)
\]
and therefore by applying chain rule,
\begin{align*}
	\frac{\ud }{\ud t}\kl(\mu_t\mid \nu_t) &= \int_\bS \ud
                                                 \mu_t(x)
                                                 \clA\big[\log
                                                 \frac{\mu_t}{\nu_t} +
                                                 1\big] - \int \ud \nu_t(x)\clA\big[\frac{\mu_t}{\nu_t}\big] \\
	&=\int_\bS \ud \nu_t(x) \Big[\frac{\mu_t(x)}{\nu_t(x)}\clA\big[\log \frac{\mu_t}{\nu_t}\big] -\clA\big[\frac{\mu_t}{\nu_t}\big]\Big] \\
	&= \nu_t\big(\gamma_t \clA \log \gamma_t - \clA \gamma_t\big)
\end{align*}
It remains to show that $\gamma_t \clA \log \gamma_t - \clA \gamma_t = \frac{\Gamma \gamma_t}{2\gamma_t}$ at each $t$. First we introduce a martingale problem for the generator
\[
N_t^f := f(X_t) - \int_0^t (\clA f) (X_s)\ud s \quad \text{is a martingale}
\]
and its quadratic variation is $(\Gamma f)(X_t)\ud t$. 
For a fixed positive function $\gamma(x)$, applying It\^o rule on the derivative of $\log \gamma(X_t)$,
\begin{align*}
	\ud \log \gamma(X_t) &= \frac{1}{\gamma(X_t)}\ud \gamma(X_t) - \frac{1}{2\gamma^2(X_t)}(\Gamma \gamma)(X_t)\ud t\\
	&= \frac{1}{\gamma(X_t)}\ud N_t^\gamma + \frac{1}{\gamma(X_t)}(\clA \gamma)(X_t)\ud t - \frac{1}{2\gamma^2(X_t)}(\Gamma \gamma)(X_t)\ud t
\end{align*}
Therefore, we obtain $\clA \log \gamma$ by grouping all finite variation terms:
\[
\clA (\log \gamma) = \frac{1}{\gamma}\clA \gamma - \frac{1}{2\gamma^2}(\Gamma \gamma)
\]

\newP{Computation for $\chisq$ divergence}
\[
\chisq(\mu_t\mid \nu_t) = \int \frac{\mu_t(x)}{\nu_t(x)} \ud \mu_t(x) -1
\]
and therefore
\begin{align*}
\frac{\ud}{\ud t} \chisq(\mu_t\mid \nu_t) &= \int \ud \mu_t(x)
                                            \clA\big[\frac{2\mu_t}{\nu_t}\big] - \int \ud \nu_t(x)\clA\big[\frac{\mu_t^2}{\nu_t^2}\big]\\
&= -\nu_t\big(\clA \gamma_t^2 - 2\gamma_t \clA \gamma_t\big) = -\nu_t\big(\Gamma \gamma_t\big)
\end{align*}
Hence the formula is obtained.

\subsection{Proof of Proposition~\ref{prop:chisq-stability-implication}}\label{apdx:pf-chisq}

For considered $f$-divergences, the following inequalities are standard (see~\cite[Lemma 2.5 and 2.7]{Tsybakov2009estimation}):
\[
2\|\mu-\nu\|_\tv^2 \le \kl(\mu\mid \nu)\le \chisq(\mu\mid\nu)
\]
The first inequality is called the Pinsker's inequality.  The result
follows directly from using these inequalities.  For $L^2$ stability, observe that for any $f \in C_b(\bS)$,
\[
\pi_T^\mu(f) - \pi_T^\nu(f) = \pi_T^\nu(f\gamma_T) - 
\pi_T^\nu(f)\pi_T^\nu(\gamma_T)
\]
Therefore by Cauchy-Schwarz inequality,
\[
|\pi_T^\mu(f)-\pi_T^\nu(f)|^2 \le \frac{\operatorname{osc}(f)}{4} 
\chisq(\pi_T^\mu\mid \pi_T^\nu)
\]
where $\operatorname{osc}(f) = \sup_{x\in\bS} f(x) - \inf_{x\in\bS} f(x)$ denotes the oscillation of 
$f$. Taking $\E^\mu(\cdot)$ on both sides yields the conclusion.

\subsection{Proof of Theorem~\ref{thm:div_filter}}\label{apdx:pf-thm-div-filter}

Similar to the proof of Thm.~\ref{thm:div_MP}, we assume the probability density functions for $\pi_t^\mu$ and $\pi_t^\nu$.
The result directly follows from It\^o rule: 
\begin{align*}
	\ud \kl(\pi_t^\mu\mid\pi_t^\nu) &= \pi_t^\nu\big(\gamma_t \clA \log \gamma_t - \clA \gamma_t\big) \ud t + \pi_t^\mu\big((\log \gamma_t + 1) (h - \pi_t^\mu(h))\big) \ud I_t^\mu \\
	&\quad - \pi_t^\nu\big(\gamma_t(h - \pi_t^\nu(h))\big)\ud I_t^\nu + \frac{1}{2}\pi_t^\mu\big((h-\pi_t^\mu(h))^2\big)\ud t\\
	&\quad -\pi_t^\mu\big((h-\pi_t^\mu(h))(h-\pi_t^\nu(h))\big)\ud t + \frac{1}{2}\pi_t^\mu\big((h-\pi_t^\nu(h))^2\big)\ud t\\
	&= \pi_t^\nu\big(\gamma_t \clA \log \gamma_t - \clA \gamma_t\big) \ud t - \frac{1}{2}\big(\pi_t^\mu(h)-\pi_t^\nu(h)\big)^2\ud t + \big[\cdots\big]\ud I_t^\mu 
\end{align*}
For the last part we use $\ud I_t^\nu = \ud Z_t - \pi_t^\nu(h)\ud t = \ud I_t^\mu + \big(\pi_t^\mu(h)-\pi_t^\nu(h)\big)\ud t$.

For $\chisq$divergence,
\begin{align*}
	\ud \chisq(\pi_t^\mu\mid \pi_t^\nu) &= -\pi_t^\nu\big(\Gamma \gamma_t\big) \ud t + \pi_t^\mu\big(2\gamma_t(h-\pi_t^\mu(h))\big)\ud I_t^\mu \\
	&\quad -\pi_t^\nu\big(\gamma_t^2(h-\pi_t^\nu(h))\big)\ud I_t^\nu  +\pi_t^\mu\big(\gamma_t(h-\pi_t^\mu(h))^2\big)\ud t \\
	&\quad -2\pi_t^\mu\big(\gamma_t(h-\pi_t^\mu(h))(h-\pi_t^\nu(h))\big)\ud t + \pi_t^\mu\big(\gamma_t(h-\pi_t^\nu(h))^2\big)\ud t\\
	&=\pi_t^\nu\big(\Gamma \gamma_t\big) \ud t + \pi_t^\mu(\gamma_t)\big(\pi_t^\mu(h)-\pi_t^\nu(h)\big)^2\ud t \\
	&\quad -\pi_t^\mu\big(\gamma_t(h-\pi_t^\nu(h))\big)\big(\pi_t^\mu(h)-\pi_t^\nu(h)\big)\ud t +  \big[\cdots\big]\ud I_t^\mu \\
	&=\pi_t^\nu\big(\Gamma \gamma_t\big) \ud t -  \big(\pi_t^\mu(\gamma_th)-\pi_t^\mu(\gamma_t)\pi_t^\mu(h)\big)\big(\pi_t^\mu(h)-\pi_t^\nu(h)\big)\ud t\\
	&\quad +  \big[\cdots\big]\ud I_t^\mu
\end{align*}
The claim follows from that $\pi_t^\mu(h) = \pi_t^\nu(\gamma_t h)$ and $\pi_t^\nu(\gamma_t) = 1$.

\subsection{Proof of Proposition~\ref{prop:backward-map-and-bsde}}\label{apdx:pf-prop-bmap-bsde}

Apply It\^o formula on $Y_t(X_t)$ to obtain
\begin{align*}
	\ud Y_t(X_t) &= V_t^\tp(X_t)\big(\ud Z_t-h(X_t)\ud t\big) + (\text{martingale associated with $\clA$})\\
	&=: V_t^\tp(X_t)\ud W_t + \ud N_t
\end{align*}
where $\{N_t:t\ge 0\}$ is a martingale associated with $\clA$. Integrating this from some $t$ to $T$ yields
\begin{equation}\label{eq:ytxt}
	\gamma_T(X_T) = Y_t(X_t) + \int_t^T V_s^\tp(X_s)\ud W_s +\ud N_t
\end{equation}
and therefore
\[
Y_t(x) = \E^\nu\big(\gamma_T(X_T)\mid \clZ_t \vee [X_t = x]\big),\quad x\in\bS
\]
In particular at time $t=0$, we have $Y_0(x) = y_0(x)$ by definition.

The variance of $Y_t(X_t)$ is also obtained from~\eqref{eq:ytxt}:
\begin{align*}
\E^\nu\big(|\gamma_T(X_T)-1|^2\big) &= \E^\nu\Big(|Y_t(X_t)-1|^2 + \int_t^T |V_s(X_s)|^2 + (\Gamma Y_s)(X_s)\ud s\Big)\\
&=\var^\nu(Y_t(X_t)) + \E^\nu\Big(\int_t^T \pi_s^\nu(\Gamma Y_s) + \pi_s^\nu(|V_s|^2)\ud s\Big)
\end{align*}
Hence we obtain~\eqref{eq:var_contractive_ext} by differentiation.

\subsection{Proof of Proposition~\ref{prop:functional-inequality}}\label{apdx:pf-filter-stability}

From~\eqref{eq:ytxt}, we note that $Y_t(X_t)$ is a martingale, and therefore $\E^\nu(Y_t(X_t)) = \E^\nu(\gamma_T(X_T)) = 1$ for all $0\le t \le T$.

Now set $\nu = \bmu$ from Prop.~\ref{prop:backward-map-and-bsde}, and then by c-PI we have
\[
\frac{\ud}{\ud t} \var^\bmu\big(Y_t(X_t)\big) \ge c\,\var^\bmu\big(Y_t(X_t)\big)
\]
Therefore,
\[
\var^\bmu\big(\gamma_T(X_T)\big) \ge e^{cT}\var^\bmu\big(Y_0(X_0)\big)
\]
Meanwhile from~\eqref{eq:chisq_inequality} and that $Y_0(x) = y_0(x)$,
\begin{align*}
	\E^\mu\big(\chisq(\pi_T^\mu\mid\pi_T^\bmu)\big)^2 &\le \var^\nu(Y_0(X_0))\var^\nu(\gamma_0(X_0))\\
	&\le e^{-cT}\var^\bmu\big(\gamma_T(X_T)\big) \var^\nu(\gamma_0(X_0))
\end{align*}
Since $\var^\bmu\big(\gamma_T(X_T)\big) = \E^\bmu\big(\chisq(\pi_T^\mu\mid\pi_T^\bmu)\big)$, we have
\[
R_T := \frac{\E^\mu\big(\chisq(\pi_T^\mu\mid\pi_T^\bmu)\big)}{\E^\bmu\big(\chisq(\pi_T^\mu\mid\pi_T^\bmu)\big)} \ge \underline{a}
\]
and
\[
R_T\E^\nu\big(\chisq(\pi_T^\mu\mid\pi_T^\bmu)\big)\le e^{-cT}\chisq(\mu\mid\bmu)
\]
Hence the claim is proved.

\subsection{Proof of the Theorem~\ref{thm:2nd-law-obs}}\label{apdx:pf-2nd-law-obs}
Upon  the application of It\^o's rule 
\begin{align*}
	\ud \mathcal F(\pi^\mu_t,U_t) 
	&=  \ud (\pi^\mu_t(U_t)) - \ud \mathcal S(\pi^\mu_t) \\&=    \pi^\mu_t(\partial_t U_t) \ud t + \ud \pi^\mu_t( U_t) \ud t+\ud \pi^\mu_t(\log \pi^\mu_t)  +\frac{1}{2}  \pi^\mu_t (h - \pi^\mu_t(h))^2\ud t
	\\
	&=   \pi^\mu_t(\partial_t U_t) \ud t + \pi^\mu_t(\mathcal A_t U_t) \ud t  + \pi^\mu_t(\mathcal A_t \log \pi^\mu_t) \ud t  +\frac{1}{2}  \pi^\mu_t (h - \pi^\mu_t(h))^2\ud t\\
	&\quad + [\cdots]\ud I_t^\mu
\end{align*}
Then, using the identity \[\pi^\mu_t(\mathcal A_t U_t +  \mathcal A_t \log (\pi^\mu_t) ) =  \nu_t( \gamma_t\mathcal A_t \log (\gamma_t)) = - \fish(\pi_t^\mu,
\nu_t),\] we arrive at 
\begin{align*}
	\ud \mathcal F(\pi^\mu_t,U_t) 
	&=  \pi^\mu_t(\partial_t U_t) \ud t - \fish(\pi_t^\mu,
	\nu_t) \ud t   +\frac{1}{2}  \pi^\mu_t (h - \pi^\mu_t(h))^2\ud t  + [\cdots] \ud I_t^\mu
\end{align*}
The result follows by taking the integral from $t=0$ to $t=T$ and, then, taking the expectation. 

%---------------------------------------------------------------
%
% BibTeX users please use
% \bibliographystyle{}
% \bibliography{}
%
% Non-BibTeX users please follow the syntax
% the syntax of "referenc.tex" for your own citations
%\input{referenc}
%%%%%%%%%%%%%%%%%%%%%%%%%%%%%%%%%%%%%%%%%%%%%%%%%%%%%%%%%%%%%%%%%%%%%%

\bibliographystyle{IEEEtran}
\bibliography{bibfiles/_master_bib_jin.bib,bibfiles/jin_papers.bib,bibfiles/thermo-refs.bib}

%%%%%%%%%%%%%%%%%%%%%%%%%%%%%%%%%%%%%%%%%%%%%%%%%%%%%%%%%%%%%%%%%%%%%%

\end{document}